\documentclass[11pt]{article}
\usepackage[T1]{fontenc}
\usepackage{enumerate}
\usepackage{amsmath}
\usepackage{amsthm}
\usepackage{amsfonts}
\usepackage{amssymb}
\usepackage{algorithm}
\usepackage{algpseudocode}
\usepackage[numbers]{natbib}
\usepackage{xcolor}
\usepackage{graphicx}
\usepackage{tikz}
\usepackage{fullpage}
\usepackage{xcolor}
\usepackage{hyperref}
\usepackage{bbm}

\newtheorem{question}{Question}
\numberwithin{question}{section}

\newtheorem{theorem}[question]{Theorem}
\newtheorem{proposition}[question]{Proposition}
\newtheorem{lemma}[question]{Lemma}
\newtheorem{claim}[question]{Claim}
\newtheorem{fact}[question]{Fact}
\newtheorem{observation}[question]{Observation}
\newcommand*\samethanks[1][\value{footnote}]{\footnotemark[#1]}

\newtheorem{definition}[question]{Definition}

\newtheorem{remark}[question]{Remark}

\title{The threshold for the asymmetric vertex-Ramsey property in randomly perturbed graphs}
\author{Asier Calbet\thanks{Ume{\aa} Universitet, Sweden. Emails: \texttt{asier.calbet@umu.se}, \texttt{victor.falgas-ravry@umu.se}. Research supported by Kempe Stiftelse grant JCSMK24-556 and Swedish Research Council grant VR 2021-03687.} \and Victor Falgas-Ravry\samethanks \and Joseph Hyde\thanks{University of Birmingham, UK.  Email: \texttt{j.f.hyde@bham.ac.uk}. Research supported by a Leverhulme Early Career Fellowship.}}

\begin{document}

\maketitle
\begin{abstract}
For $r \geq 2$ and graphs $H_1, \ldots, H_r, G$, we say that $G$ is $(H_1, \ldots, H_r)$ vertex-Ramsey, or $(H_1, \ldots, H_r)_v$-Ramsey, if whenever we colour the vertices of $G$ with colours from the set $[r]=\{1,2, \ldots, r\}$ 
there exists $j \in [r]$ such that some copy of $H_j$ in $G$ is monochromatic in colour $j$.
Given any fixed collection of graphs $H_1, \ldots, H_r$, {\L}uczak, Ruci\'{n}ski and Voigt~\cite{lrv} and Kreuter~\cite{kreuter96} determined in the 1990s the threshold edge probability $p$ at which the binomial random graph $G(n,p)$ becomes $(H_1, \ldots, H_r)_v$-Ramsey. More recently, Das, Morris and Treglown~\cite{dmt} investigated the vertex-Ramsey property in the randomly perturbed setting. When $r=2$ they determined the number of random edges one must add to a dense graph to ensure that with probability $1-o(1)$ the resulting graph is $(H_1, H_2)_v$-Ramsey whenever one of $H_1$ or $H_2$ is a clique. They posed the problem of extending their results to all pairs of graphs $(H_1, H_2)$.

In this paper we resolve a more general form of their problem and determine for any $r\geq 2$ and $r$-tuple of graphs $(H_1, \ldots, H_r)$ the number of random edges one must add to a dense graph to ensure that with probability $1-o(1)$ the resulting graph is $(H_1, \ldots, H_r)_v$-Ramsey.
\end{abstract}
\section{Introduction}
Given $r \geq 2$ and graphs $H_1, \ldots, H_r, G$, we say that \emph{$G$ is $(H_1, \ldots, H_r)$ vertex-Ramsey}, or that \emph{$G$ is $(H_1, \ldots, H_r)_v$-Ramsey},
if whenever we colour the vertices of $G$ with colours from the set $[r]:=\{1,2, \ldots, r\}$ there exists some $j \in [r]$ such that some copy of $H_j$ in $G$ is monochromatic in colour $j$. We denote this property by $G \rightarrow (H_1, \ldots, H_r)_v$. In the special case when $H_1 = \cdots = H_r = H$ we write this more compactly as $G \rightarrow (H, r)_v$ and say `$G$ is $(H, r)_v$-Ramsey'.

In the classical setting, the vertex-Ramsey problem is fairly uninteresting compared with its edge counterpart.
Indeed, denoting by $v(G)$ the order of a graph $G$ and by $K_n$ the complete graph of order $n$, it is easy to see for any $r \geq 2$ and graphs $H_1, \ldots, H_r$ that $K_n \rightarrow (H_1, \ldots, H_r)_v$ if and only if $n\geq v(H_1) + \cdots + v(H_r) - (r - 1)$. More generally for dense but non-complete graphs, writing $e(G)$ for the number of edges in a graph $G$, we have the following. We say a graph is non-empty is it contains at least one edge. \begin{observation}\label{obs:dense}
    Let $r\geq 2$ and $H_1, \ldots, H_r$ be non-empty graphs.
    There exists a graph $G^{\star}$ with $$e(G^{\star}) \leq \left(1 - \frac{1}{\sum_{j=1}^{r}(\chi(H_j) - 1)}\right)\left(\binom{v(G^{\star})}{2} + \frac{v(G^{\star})}{2}\right)$$
    such that $G^{\star} \not\rightarrow (H_1, \ldots, H_r)_v$.
    On the other hand, for any fixed $\varepsilon > 0$ there exists $n_0 \in \mathbb{N}$ such that any graph $G$ with $v(G)\geq n_0$ and $e(G) \geq \left(1 - \frac{1}{\sum_{j=1}^r(\chi(H_j) - 1)} + \varepsilon\right)\binom{v(G)}{2}$ satisfies $G \rightarrow (H_1, \ldots, H_r)_v$.
\end{observation}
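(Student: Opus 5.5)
Write $k:=\sum_{j=1}^r(\chi(H_j)-1)$, which is at least $r$ because every $H_j$ is non-empty. The plan has two parts: an explicit Turán-graph construction for the lower bound, and the Erd\H{o}s--Stone theorem for the upper bound.

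\emph{Construction.} Take $G^\star:=T_k(N)$, the balanced complete $k$-partite Turán graph on $N$ vertices, for any $N\geq k$. Its edge count is the standard Turán bound
\[
e(T_k(N))\leq \Bigl(1-\tfrac1k\Bigr)\tfrac{N^2}{2}=\Bigl(1-\tfrac1k\Bigr)\Bigl(\tbinom N2+\tfrac N2\Bigr),
\]
which is exactly the stated inequality. To colour it, split the $k$ parts into consecutive blocks of sizes $\chi(H_1)-1,\dots,\chi(H_r)-1$, and give every vertex of the $j$-th block colour $j$. The subgraph induced by colour $j$ is then $(\chi(H_j)-1)$-partite, so it has chromatic number below $\chi(H_j)$. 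Hence it contains no copy of $H_j$, and $G^\star\not\to(H_1,\dots,H_r)_v$.

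\emph{Density direction.} Let $e(G)\geq(1-1/k+\varepsilon)\binom n2$ with $n$ large. Set $m:=\max_j v(H_j)$. By the Erd\H{o}s--Stone theorem, for $n\geq n_0(\varepsilon,k,m)$ the graph $G$ contains the complete $(k+1)$-partite graph $K_{k+1}(m)$, and in particular $K_k(m)$. It then suffices to show $K_k(m)\to(H_1,\dots,H_r)_v$.

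Take any $r$-colouring of $K_k(m)$ and suppose no copy of $H_j$ is monochromatic in colour $j$ for any $j$. For each $j$, let $a_j$ be the number of parts containing at least $v(H_j)$ vertices of colour $j$. Those parts, restricted to colour $j$, span a copy of $K_{a_j}(v(H_j))$. This graph contains $H_j$ as soon as $a_j\geq\chi(H_j)$: colour classes of a proper $\chi(H_j)$-colouring of $H_j$ have size at most $v(H_j)$ and can be embedded into distinct parts. So $a_j\leq\chi(H_j)-1$, and $\sum_j a_j\leq k-1<k$. Consequently some part $P$ has fewer than $v(H_j)$ vertices of colour $j$ for every $j$. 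This gives only $|P|\leq\sum_j (v(H_j)-1)$, which does not contradict $|P|=m$ as chosen.

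The fix is to take $m:=\sum_j v(H_j)$ instead, which Erd\H{o}s--Stone still provides since $m$ is a constant. Then $|P|\leq\sum_j(v(H_j)-1)<m=|P|$, a contradiction. This pigeonhole on parts is the only non-routine step. The main point to check is that each part can be assigned to a single colour $j$ with at least $v(H_j)$ vertices, rather than being split across colours; the counting above handles this directly, because each part must be counted in some $a_j$ while the $a_j$ together cover fewer than $k$ parts.
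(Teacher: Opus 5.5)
There is a genuine gap, an off-by-one error in the density direction. Your construction is correct and is the same as the paper's.

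You pass from $K_{k+1}(m)$ to $K_k(m)$ and then try to prove $K_k(m)\to(H_1,\dots,H_r)_v$. That claim is false. $K_k(m)$ is the balanced Tur\'an graph $T_k(km)$, and your own first half shows it is \emph{not} $(H_1,\dots,H_r)_v$-Ramsey: colour $\chi(H_j)-1$ whole parts with colour $j$.

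The faulty step is ``$\sum_j a_j\le k-1$''. From $a_j\le\chi(H_j)-1$ you only get $\sum_j a_j\le\sum_j(\chi(H_j)-1)=k$. This equals the number of parts of $K_k(m)$, so no part $P$ avoided by every $a_j$ is forced. In the Tur\'an colouring every part is counted and $\sum_j a_j=k$ exactly. Enlarging $m$ cannot help, because the obstruction is the number of parts, not their size.

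The repair is to keep the extra part that Erd\H{o}s--Stone already gives you, and work in $K_{k+1}(m)$. Then $\sum_j a_j\le k<k+1$, so some part $P$ contains fewer than $v(H_j)$ vertices of colour $j$ for every $j$. Hence $|P|\le\sum_j(v(H_j)-1)<m$ for $m=\sum_j v(H_j)$, which is a contradiction (the paper uses $m=1+\sum_j(v(H_j)-1)$).

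With this change your argument coincides with the paper's. It uses the balanced $k$-partite construction for one direction. For the other it applies Erd\H{o}s--Stone--Simonovits to the complete $(k+1)$-partite graph, followed by a double pigeonhole: first within each part, then over the $k+1$ parts.
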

\begin{proof}
For the first part, consider a complete balanced $\sum_{j=1}^{r}(\chi(H_j) - 1)$-partite graph $G^{\star}$. This graph has
$e(G^{\star}) \leq \left(1 - \frac{1}{\sum_{j=1}^{r}(\chi(H_j) - 1)}\right)\frac{v(G^{\star})^2}{2}$ 
edges. For each colour $j\in [r]$, select a set of $\chi(H_j)-1$ parts of $G$ and colour every vertex within them in colour $j$. This clearly yields an $r$-colouring of the vertices of $G^{\star}$ containing no monochromatic copy of $H_j$ in colour $j$ for any $j\in [r]$.

The second part of the observation follows from the famous Erd\H{o}s--Stone--Simonovits~\cite{esimonovits, estone} theorem. Indeed, let $H^*$ be the complete $\left(1+\sum_{j=1}^{r}(\chi(H_j) - 1)\right)$-partite graph with all part sizes equal to $\left(1+\sum_{j=1}^r (v(H_j)-1)\right)$. Observe that $H^* \to (H_1, \ldots, H_r)_v$: by the pigeon-hole principle, for every part in $H^{\star}$, there is at least one colour $i$ such that at least $v(H_i)$ vertices in that part are in colour $i$. By the pigeonhole principle again, there is some colour $j$ for which $i=j$ for at least $(\chi(H_i) =) \ \chi(H_j)$ different parts, yielding a monochromatic copy of $H_j$ in colour $j$.

 Now for any $\varepsilon > 0$, the Erd\H{o}s--Stone--Simonovits theorem~\cite{esimonovits, estone} implies that there exists $n_0 \in \mathbb{N}$ such that for any graph $G$ with $v(G) \geq n_0$ with $e(G) \geq \left(1 - \frac{1}{\sum_{j=1}^r(\chi(H_j) - 1)} + \varepsilon\right)\binom{v(G)}{2}$, we have $H^* \subseteq G$. Any such graph $G$ is thus $(H_1, \ldots, H_r)_v$-Ramsey.
\end{proof}
The graph $G^{\star}$ constructed in Observation~\ref{obs:dense} is both dense and highly structured. It is thus natural to ask about the vertex-Ramsey properties of host graphs $G$ with more uniform edge distribution. 
In this vein, \L uczak, Ruci\'{n}ski and Voigt~\cite{lrv} initiated the study of vertex-Ramsey properties of random graphs. 
The model of random graphs considered here is the Erd\H{o}s-R\'{e}nyi binomial random graph model $G(n,p)$ with $n$ vertices and each edge included independently with probability $p = p(n)$. To state {\L}uczak, Ruci\'{n}ski and Voigt's main result, we must first introduce a key graph parameter.
\begin{definition}
    Given a graph $H$, we define the \emph{1-density of $H$} to be $$m_1(H) := \max\left\{\frac{e(J)}{v(J) - 1} : J \subseteq H, v(J) \geq 2\right\},$$setting $m_1(H) = 0$ when $v(H) \leq 1$.
\end{definition}
\noindent Note that throughout this paper, we follow the convention of setting $n^{-1/0} := 0$.
\begin{theorem}[\L uczak, Ruci\'{n}ski and Voigt~\cite{lrv}]\label{thm:lrv} For $r \geq 2$ and any graph $H$ with $e(H)\geq 1$, 
excluding the case when $r = 2$ and $H$ is a matching, the following holds: 
there exist constants $c, C>0$ such that $$\lim_{n\to \infty} \mathbb{P}[G(n,p) \to (H, r)_v] = \begin{cases}
    1 & \mbox{if} \ p > Cn^{-\frac{1}{m_1(H)}}    \\
    0 & \mbox{if} \ p < cn^{-\frac{1}{m_1(H)}}. 
    \end{cases}$$
\end{theorem}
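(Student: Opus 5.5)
The two statements are proved separately. The $0$-statement uses a first-moment/structural argument. The $1$-statement combines a deletion/supersaturation argument with Janson's inequality, or alternatively the hypergraph container method.

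\emph{$0$-statement.} Let $J \subseteq H$ attain $m_1(H)=e(J)/(v(J)-1)$, and note that $J$ can be taken connected. For $p<cn^{-1/m_1(H)}$, the expected number of copies of $J$ is $O(n^{v(J)}p^{e(J)}) = O(c^{e(J)} n)$. I will show that with high probability every component of the union of copies of $H$ (viewed as a hypergraph on vertex sets of copies) has bounded structure. The key tool is a standard lemma: if a connected graph $F$ is built as a union of copies of $H$, each glued to the previous ones along at least one vertex, then $e(F)/(v(F)-1)$ is at least $m_1(H)$. The expected number of such $F$ of bounded size is at most $n\cdot (Cp^{m_1}n)^{\dots}$, which is small in expectation per vertex. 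Hence, with high probability, every such cluster is either a tree-like union, glued at single vertices and with no cycles of copies, or has at most $\varepsilon n$ vertices. A tree-like union of copies of $H$ can be $r$-coloured greedily so that no copy is monochromatic: colour copies in BFS order, and since each new copy shares only one vertex with earlier ones, colour its remaining vertices with two different colours. Here $r\ge2$ is needed, and $H$ must have at least two vertices outside the glue vertex, which is where the matching exception with $r=2$ enters. Unbounded or dense clusters are ruled out by the first-moment count over "minimal non-colourable" configurations, using the fact that such a configuration has density strictly above $m_1(H)$ or contains a cycle of copies. Its expected count is then $O(n^{-\delta})$ or $O(c)$, and we choose $c$ small.

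\emph{$1$-statement.} Here $p>Cn^{-1/m_1(H)}$. Given any $r$-colouring, some colour class $V_j$ has $|V_j|\ge n/r$, so it suffices to show that with high probability every set $U$ of size $n/r$ spans a copy of $H$. Janson's inequality gives $\mathbb P[G[U] \text{ is } H\text{-free}]\le \exp(-\mu^2/(2\bar\Delta))$ when $\bar\Delta\ge\mu$. Here $\mu=\Theta(n^{v(H)}p^{e(H)})$ and $\bar\Delta$ is dominated by pairs of copies overlapping in a subgraph $J$. One computes $\mu^2/\bar\Delta \ge \min_{J} \Theta(n^{v(J)}p^{e(J)})$ over subgraphs $J$ with $e(J)\geq 1$. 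The condition $p\ge Cn^{-1/m_1(H)}$ gives $n^{v(J)}p^{e(J)}\ge C^{e(J)} n$. The exponent is therefore $\Omega(C n)$, which beats the union bound over $2^n$ sets $U$ once $C$ is large. If $\bar\Delta<\mu$, Janson gives $\exp(-\mu/2)$, and $\mu\ge C n$ likewise suffices.

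The main obstacle is the $0$-statement. It needs a careful enumeration of "dangerous" configurations and a proof that all remaining clusters admit a proper colouring with no monochromatic copy. A fully rigorous version probably requires the full Łuczak--Ruciński--Voigt machinery, since cycles of copies must be handled through a density increment argument. As this theorem is cited from \cite{lrv}, one may simply refer to that paper for details.
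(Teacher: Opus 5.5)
The paper gives no proof of this theorem; it is quoted from \cite{lrv}. Your $1$-statement is the standard argument and is correct. For $J\subseteq H$ with $e(J)\ge 1$ one has $e(J)\le m_1(H)(v(J)-1)$, so for $p\ge Cn^{-1/m_1(H)}$ we get $(n/r)^{v(J)}p^{e(J)}\ge r^{-v(H)}C^{e(J)}n$. Janson then bounds the probability that a fixed $U$ of size $n/r$ spans no copy of $H$ by $e^{-\Omega(Cn)}$, which beats the $2^n$ choices of $U$.

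Your $0$-statement sketch has a genuine gap, and it sits exactly where the matching exception lives. You discard cycles of copies with a first-moment bound $O(c)$ and then ``choose $c$ small''. That only gives $\limsup_{n\to\infty}\mathbb{P}[G(n,p)\to(H,r)_v]=O(c)$, not a limit $0$ for fixed $c$. Such cycles really occur: a ring of copies of a $1$-extremal $J\subseteq H$, consecutive copies sharing one vertex, has $e/v=m_1(H)$ exactly. For $H=K_3$, three triangles any two of which share a different single vertex form a strictly balanced graph with $6$ vertices and $9$ edges; its number of copies in $G(n,cn^{-2/3})$ is asymptotically Poisson with mean $\Theta(c^9)$.

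Overlaps in more than one vertex are even more frequent. Two triangles sharing an edge have $e/(v-1)=5/3>m_1(K_3)$, yet $G(n,cn^{-2/3})$ contains $\Theta(n^{2/3})$ of them. So ``density above $m_1(H)$'' does not make a configuration rare; the first moment only excludes configurations of bounded size with $e/v>m_1(H)$. All these sparse configurations must be \emph{coloured}, not excluded. Your greedy colouring only handles unions in which each new copy meets the earlier ones in a single vertex.

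Your diagnosis of the matching exception is also off. For $H=K_2$, tree-like unions are trees and are trivially $2$-colourable; the obstruction is odd cycles, i.e.\ exactly the cycles of copies you throw away. If the $O(c)$ step were legitimate, the same sketch would ``prove'' the false $0$-statement for $H=K_2$, $r=2$.

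The missing idea is to argue from a minimal Ramsey subgraph rather than to classify clusters.
\begin{enumerate}[(i)]
\item Take $J\subseteq H$ vertex-minimal with $e(J)/(v(J)-1)=m_1(H)$. It is strictly $1$-balanced, and a colouring with no monochromatic $J$ has no monochromatic $H$. This reduction is needed anyway, because your lemma $e(F)/(v(F)-1)\ge m_1(H)$ for unions of copies of $H$ fails when $H$ is not $1$-balanced (take $F=H$).
\item If $F\subseteq G(n,p)$ is a minimal $(J,r)_v$-Ramsey graph, then for each vertex $x$ a good colouring of $F-x$ shows that $x$ lies in $r$ copies of $J$ pairwise meeting only at $x$. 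Hence $e(F)/v(F)\ge r\delta(J)/2$, where $\delta(J)$ is the minimum degree.
\item Strict $1$-balancedness gives $\delta(J)>m_1(J)$ when $v(J)\ge 3$. Thus $e(F)/v(F)>m_1(H)$ unless $J=K_2$ and $r=2$, which is precisely where matchings fail.
\item Non-matching forests also have $J=K_2$ and are handled directly. For $p\le cn^{-1}$ with $c<1$, a.a.s.\ every component has at most one cycle, and such a graph can be $2$-coloured so that each colour class induces a matching.
\item Since $F$ is a connected union of copies of $J$, it remains to enumerate such unions. They consist of boundedly many attachments along two or more vertices, each costing a factor at most $n^{-1/e(J)}$, joined by tree-like chains of arbitrary length contributing geometric factors in $c$. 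This shows that a.a.s.\ no connected union of copies of $J$ in $G(n,p)$ has $e/v>m_1(H)$.
\end{enumerate}
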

\noindent We refer to the statements in the cases $\lim_{n\to \infty} \mathbb{P}[G(n,p) \to (H, r)_v] = 1$ 
and $\lim_{n\to \infty} \mathbb{P}[G(n,p) \to (H, r)_v] = 0$ in such results as the {\it 1-statement} and the {\it 0-statement}, respectively. 
When $H$ is a matching, for any constant $c > 0$ we have that many vertex disjoint copies of $K_3$
appear in $G(n,p)$ with probability at least some constant $c' = c'(c) > 0$ whenever $p \geq c n^{-1} = c n^{-1/m_1(H)}$. 
Observe that $K_3 \rightarrow (K_2, K_2)_v$.
Hence, we require $p = o(n^{-1/m_1(H)})$ rather than $p \leq cn^{-1/m_1(H)}$ for the $0$-statement when $H$ is a matching.

Kreuter~\cite[Theorem 2]{kreuter96} proved an asymmetric generalization of Theorem~\ref{thm:lrv}. 
To state Kreuter's result, we must introduce another graph parameter, refining the notion of 1-density.
\begin{definition}[Kreuter threshold]
    Given any two graphs $H_1$ and $H_2$ with $m_1(H_1) \geq m_1(H_2)$, 
    we define the \emph{mixed 1-density of $H_2$ and $H_1$} (or \emph{Kreuter threshold of $H_2$ and $H_1$}) to be $$m_1(H_2, H_1) := \max\left\{\frac{m_1(H_2) + e(J)}{v(J)}: J \subseteq H_1\right\},$$ setting $m_1(H_2, H_1) = 0$ if $v(H_1) = 0$.
\end{definition}
\noindent The following facts, which can be proved by elementary algebra, illustrate the relationship between 1-density and mixed 1-density. 
\begin{fact}\label{fact: mixed density}
   For all pairs of graphs $H_1$ and $H_2$ with $m_1(H_1) \geq m_1(H_2)$, 
  the mixed $1$-density of $H_2$ and $H_1$ lies between the 1-densities of the corresponding graphs:  $m_1(H_1) \geq m_1(H_2, H_1) \geq m_1(H_2)$. Furthermore, $m_1(H_1) > m_1(H_2, H_1) > m_1(H_2)$ if and only if $m_1(H_1) > m_1(H_2)$, and if for some graph $H_3$ we have $m_1(H_2)=m_1(H_3) \leq m_1(H_1)$ then $m_1(H_2, H_1)=m_1(H_3, H_1)$.
\end{fact}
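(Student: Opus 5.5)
The plan is to argue directly from the definitions, writing $a := m_1(H_1)$ and $b := m_1(H_2)$ with $b \leq a$. Throughout, $J$ ranges over subgraphs of $H_1$ with $v(J) \geq 1$; the degenerate case $v(H_1)=0$ falls under the stated convention and is set aside. The one structural input is the definition of $a$, which says that every $J \subseteq H_1$ with $v(J) \geq 2$ satisfies $e(J) \leq a\,(v(J)-1)$. This also holds trivially when $v(J)=1$, since then $e(J)=0$.

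\emph{Step 1: the sandwich.} For the lower bound, take $J$ to be a single vertex of $H_1$. Its contribution is $(b+0)/1 = b$, so $m_1(H_2,H_1) \geq b$. For the upper bound, fix any $J$ and put $v = v(J)$. Then
\[
\frac{b+e(J)}{v} \leq \frac{b + a(v-1)}{v} \leq \frac{a + a(v-1)}{v} = a,
\]
and taking the maximum over $J$ gives $m_1(H_2,H_1) \leq a$.

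\emph{Step 2: strictness.} If $a = b$, the sandwich forces all three quantities to be equal, so no strict inequality can hold. Now suppose $a > b$.
\begin{itemize}
\item Since $a > b \geq 0$, the graph $H_1$ has an edge. So some $J^\ast \subseteq H_1$ with $v^\ast := v(J^\ast) \geq 2$ attains $e(J^\ast) = a(v^\ast-1)$.
\item Its contribution is $(b + a(v^\ast-1))/v^\ast$. This is a weighted average of $a$ and $b$ that puts positive weight on $a$, so it is strictly larger than $b$. Hence $m_1(H_2,H_1) > b$.
\item Conversely, for every $J$ the middle inequality in the display of Step 1 is strict, because $b < a$. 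So every term is strictly less than $a$.
\item The maximum is over a finite family, so it is also strictly less than $a$.
\end{itemize}
This proves that both inequalities are strict if and only if $a > b$.

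\emph{Step 3: the final claim.} The expression $\max_J (m_1(H_2)+e(J))/v(J)$ depends on $H_2$ only through the number $m_1(H_2)$. Hence replacing $H_2$ by any $H_3$ with $m_1(H_3)=m_1(H_2)\le m_1(H_1)$ leaves it unchanged.

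No step is a real obstacle. The only point needing care is the strict upper bound: it relies on the maximum being taken over finitely many subgraphs, and on handling the single-vertex $J$ separately, since for that $J$ the strictness comes directly from $b<a$.
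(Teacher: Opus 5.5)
Your argument is correct, and it is exactly the ``elementary algebra'' the paper invokes without writing out (the paper gives no further proof): the single-vertex subgraph gives $m_1(H_2,H_1)\ge m_1(H_2)$, the bound $e(J)\le m_1(H_1)(v(J)-1)$ gives $m_1(H_2,H_1)\le m_1(H_1)$, and reading $(m_1(H_2)+e(J^\ast))/v(J^\ast)$ as a weighted average settles strictness. The final claim is immediate because the definition sees $H_2$ only through $m_1(H_2)$, so your write-up simply supplies the routine verification the paper omits.
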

\begin{theorem}[Kreuter~\cite{kreuter96}]\label{thm:kreuter}
    For $r \geq 2$ and graphs $H_1, \ldots, H_r$ such that $m_1(H_1) \geq \cdots \geq m_1(H_r)$, 
    $e(H_{1}), e(H_2) \geq 1$ and such that $H_1$ is not a matching if $r = 2$, 
    the following holds:
    there exist constants $c, C>0$ such that $$\lim_{n\to\infty} \mathbb{P}[G(n,p) \to (H_1, \ldots, H_r)_v] = \begin{cases}
    1 & \mbox{if} \ p > Cn^{-\frac{1}{m_1(H_{2}, H_1)}}    \\
    0 & \mbox{if} \ p < cn^{-\frac{1}{m_1(H_{2}, H_1)}}. 
    \end{cases}$$
\end{theorem}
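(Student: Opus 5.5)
The plan is to treat the two statements separately. Both reduce to the two-colour interaction between $H_1$ and $H_2$, since the remaining colours $3,\ldots,r$ are either easy to avoid or can be absorbed. Throughout write $p=\Theta(n^{-1/m})$ with $m=m_1(H_2,H_1)$. By Fact~\ref{fact: mixed density}, $m_1(H_2)\le m\le m_1(H_1)$.

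\textbf{0-statement.} Here $p<cn^{-1/m}$, and I will construct a good colouring in three phases.

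First use colours $3,\ldots,r$ only if needed. In fact it is simplest to colour everything with colours $1$ and $2$, which is harder and suffices. If $m<m_1(H_1)$, then copies of the densest subgraph $J_1\subseteq H_1$ attaining $m_1(H_1)$ are rare. Their expected number is $n^{v(J_1)}p^{e(J_1)}$, and below $n^{-1/m_1(H_1)}$ each vertex lies in $O(1)$ of them in expectation. The aim is to colour $2$ a set $S$ of vertices that hits every copy of $H_1$, while $G[S]$ contains no copy of $H_2$.

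The natural choice of $S$ is a \emph{minimal} vertex set meeting all copies of $H_1$ (a transversal). The key step is a deterministic-plus-probabilistic structural lemma: with high probability, every connected union of copies of $H_1$ is sparse. Concretely, every subgraph $F\subseteq G(n,p)$ with $v(F)\le K\log n$ that is a union of copies of $H_1$ glued to a copy of $H_2$ built from ``forced'' vertices satisfies a density bound. This follows from first-moment counting, because the definition of $m_1(H_2,H_1)$ exactly balances $n^{v(J)}p^{e(J)+m_1(H_2)}$. A standard greedy colouring algorithm then succeeds, as in Kreuter's and {\L}uczak--Ruci\'nski--Voigt's arguments: repeatedly colour vertices $1$ unless this closes a copy of $H_1$, and otherwise colour them $2$. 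If the algorithm fails, it certifies a structure that is too dense.

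\textbf{1-statement.} Here $p>Cn^{-1/m}$. The plan is induction on $r$ combined with a supersaturation / Janson argument. For $r$ colours, colour $r$ must avoid $H_r$. Let $U$ be the set of vertices not in colour $r$. The complement $V\setminus U$ is $H_r$-free, so by a Janson-inequality-type ``every large set contains $H_r$'' statement (valid since $p\gg n^{-1/m_1(H_r)}$), $|V\setminus U|$ cannot be large. Hence $U$ is a linear-sized set, and $G[U]$ behaves like $G(|U|,p)$, so we can iterate down to two colours.

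For two colours, the core argument is as follows. By Theorem~\ref{thm:lrv}-type reasoning (at $p\gg n^{-1/m_1(H_2)}$), every set of $\varepsilon n$ vertices spans a copy of $H_2$, and indeed many copies. So colour class $2$ has size less than $\varepsilon n$, and colour class $1$ has size at least $(1-\varepsilon)n$. We then need that deleting any $\varepsilon n$ vertices that are $H_2$-free leaves a copy of $H_1$. I would prove this via hypergraph containers for $H_2$-free sets: there are few containers, each of which misses a linear set in a robust way. This is combined with Janson's inequality showing that, for each fixed container $T$, the probability that $G[V\setminus T]$ is $H_1$-free is $\exp(-\Omega(n^{v(J)}p^{e(J)}))$-small. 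The threshold $m_1(H_2,H_1)$ is exactly where this Janson exponent beats the container count $\exp(O(n p^{-m_1(H_2)}\ldots))$.

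The main obstacle is this union bound over containers. Balancing it precisely at $m_1(H_2,H_1)$ requires choosing the right subgraph $J\subseteq H_1$ and controlling the $\Delta$ term in Janson, and this is where I would spend most effort.
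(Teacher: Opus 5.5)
Your 1-statement has two genuine gaps.

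\emph{The reduction from $r$ colours to two colours does not work.} The set $U$ of vertices not coloured $r$ is chosen after $G(n,p)$ is revealed, so $G[U]$ does not behave like $G(|U|,p)$. The uniform statement you would actually need, that every $U$ with $|U|\ge(1-\varepsilon)n$ satisfies $G[U]\to(H_1,\dots,H_{r-1})_v$, is false whenever $m_1(H_1)>m_1(H_2)$. Write $m=m_1(H_2,H_1)$ and $\ell:=\min_{J\subseteq H_1,\,e(J)>0}\bigl(v(J)-e(J)/m\bigr)$. Then $\ell=m_1(H_2)/m<1$ in that case, so some $J\subseteq H_1$ has only $\Theta(n^{\ell})$ copies at $p=Cn^{-1/m}$. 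Deleting $o(n)$ vertices therefore kills every copy of $H_1$. For the same reason, knowing that colour class $2$ has fewer than $\varepsilon n$ vertices buys nothing. The real question is whether an $H_2$-free set of size about $n^{\ell}$ can meet every copy of $H_1$.

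\emph{The container union bound cannot close at $p=Cn^{-1/m}$ in this asymmetric regime.} The codegree conditions force fingerprints of size $\tau n\gtrsim p^{-m_1(H_2)}=\Theta(n^{\ell})$ with $\tau\approx n^{\ell-1}\to 0$. So there are $\exp(\Theta(n^{\ell}\log n))$ containers. On the other hand, by Harris's inequality a fixed linear-sized set is $H_1$-free with probability at least $\exp(-O(n^{\ell}))$. No Janson estimate can beat that count: you lose a $\log n$ factor that no constant $C$ absorbs. You would also need a second round of exposure, since containers built from $G(n,p)$ are not independent of $G[V\setminus T]$.

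The missing idea is Kreuter's transversal argument. The paper cites the theorem itself, but uses this argument in proving the robust version, Theorem~\ref{thm:robust1statement}. Expose $G(n,p)$ as $G_1\cup\dots\cup G_r$ in independent rounds. With high probability $G_1$ contains a family $\mathcal{D}$ of $(r-1)\gamma_0n^{\ell}$ vertex-disjoint copies of $H_1$. In a colouring with no copy of $H_1$ in colour $1$, every member of $\mathcal{D}$ has a vertex not coloured $1$. By pigeonhole, some $\gamma_0n^{\ell}$ of these vertices form a set $T'$ monochromatic in some colour $j\ge 2$. The candidate sets $T'$ depend only on $\mathcal{D}$ and number $\exp(O(n^{\ell}))$. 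There is no $\log n$ here, because the ground set has size $\Theta(n^{\ell})$ rather than $n$. Meanwhile $|T'|p^{m_1(H_j)}\ge\gamma_0C^{m_1(H_j)}$, so Janson in the fresh round $G_j$ shows a fixed $T'$ is $H_j$-free with probability $\exp(-\Omega(\gamma_0^{2}C^{m_1(H_j)}n^{\ell}))$. This beats the count for large $C$, and it treats all $r$ colours at once with no reduction step.

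Your 0-statement sketch, which the paper does not reprove, also needs repair. The rule ``colour $1$ unless this closes $H_1$'' fails. For $H_1=K_4$, $H_2=K_2$ (so $m=7/4$) and $p=cn^{-4/7}$, there are $\Theta(n^{4/7})$ pairs of $K_4$'s joined by an edge. If both endpoints of that edge are processed last in their $K_4$'s, both are forced into colour $2$. So failure certificates need not be ``too dense''. You must choose the colour-$2$ vertices carefully and show that the sparse clusters are colourable: tree-like ones occur with high probability, and unicyclic ones with probability bounded away from $0$ at $p=cn^{-1/m}$. That is exactly where the exclusion of matchings for $r=2$ enters (for $H_1=H_2=K_2$ the triangle is such a cluster and is not colourable). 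Your sketch never uses this hypothesis.
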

\noindent In this paper, we shall obtain an analogue of Kreuter's result for \emph{randomly perturbed graphs}, which were introduced by Bohman, Frieze and Martin~\cite{bfm} in 2003. In this setting, one has a (somewhat) dense graph which is perturbed by the addition of a given number of random edges. This can be viewed as a model interpolating between the random graph $G(n,p)$ on the one hand and the (somewhat) dense or structured graphs usually studied in extremal graph theory. Randomly perturbed graphs have received extensive attention due to the often delicate interplay between extremal and probabilitic considerations required for their study, and constitute a highly active area of research in combinatorics --- there are now over 138 citations of the original paper of Bohman, Frieze and Martin --- with a particular focus on finding spanning structures in randomly perturbed graphs.

Our goal is to investigate threshold functions for (multicolour, asymmetric) vertex-Ramsey properties in randomly perturbed graphs. Given a graph $G$, we define its \emph{density} $d(G)$ to be $d(G) := e(G)/ \binom{v(G)}{2}$. 
\begin{definition}
    Let $d \in [0,1]$ and $H_1, \ldots, H_r$ be graphs. We say a function $p = p(n): \ \mathbb{N}\rightarrow [0,1]$ is a 
    \emph{perturbed vertex-Ramsey threshold function for $H_1, \ldots, H_r$ at density $d$} if:
    \begin{itemize}
        \item[(i)] there exists an infinite increasing sequence of positive integers $(n_k)_{k\in \mathbb{N}}$ and an associated sequence of $n_k$-vertex graphs $(G_{n_k})_{k \in \mathbb{N}}$, each of density at least $d$, such that if $q(n) = o(p(n))$, 
        then 
        \[\lim_{k\rightarrow \infty}\mathbb{P}\left[G_{n_k} \cup G(n_k,q(n_k)) \rightarrow (H_1, \ldots, H_r)_v\right]=0\]
        \item[(ii)] for any $q(n) = \omega(p(n))$ and any sequence $(G_n)_{n\in \mathbb{N}}$ of $n$-vertex graphs, each of density at least $d$, we have 
        \[\lim_{n\rightarrow \infty}\mathbb{P}\left[G_n \cup G(n,q(n)) \rightarrow (H_1, \ldots, H_r)_v\right]=1.\]
    \end{itemize}
\end{definition}
\noindent     We write $p(n; H_1, \ldots, H_r, d)$ as a placeholder for any instance $p(n)$ of a perturbed vertex-Ramsey threshold function for $H_1, \ldots, H_r$ at density $d$. Further if every graph on sufficiently many vertices of density at least $d$ is $(H_1, \ldots, H_r)_v$-Ramsey (and thus (i) above cannot be satisfied), we define $p(n; H_1, \ldots, H_r, d) := 0$. Hence from Observation~\ref{obs:dense}, for any $\varepsilon > 0$ we have $p(n; H_1, \ldots, H_r, d) := 0$ when $d =  (1 - 1/({\sum_{j=1}^r(\chi(H_j) - 1)}) + \varepsilon)$.
If $H_1 = \cdots = H_r = H$ for some graph $H$, we write $p(n; H, r, d)$ in place of $p(n; H_1, \ldots, H_r, d)$. Since being $(H_1, \ldots, H_r)$ vertex-Ramsey is a non-trivial monotone property, it follows from the Bollob\'as--Thomason threshold theorem~\cite{bollobas1987threshold} that perturbed vertex-Ramsey threshold functions exist for all values of $d$ for which $p(n; H_1, \ldots, H_r, d)\neq 0$. 
\begin{remark}
    In some very specific cases the vertex-Ramsey problem in the randomly perturbed setting may reduce to the same problem in the random setting. Indeed, as observed in~\cite[Observation 1.13]{dmt}, if $H_1=H_2=\ldots =H_r=H$ for some graph $H$ and $r\geq 2k:=2\lceil 1/(1-d)\rceil $, then  $p(n; H_1, \ldots, H_r, d)=m_1(H)$: for the upper bound, it follows from Theorem~\ref{thm:kreuter} that for $p\geq Cn^{-1/m_1(H)}$ we have $G(n,p)\rightarrow(H, r)_v$ with probability $1-o(1)$; for the lower bound, consider a complete balanced $k$-partite graph $G_n$ on $n$ vertices. Clearly $d(G_n)\geq d$. Now inside each of the $k$ parts $V_i$ of $G_n$, Theorem~\ref{thm:kreuter} implies that for $p\leq cn^{-1/m_1(H)}$, we have $G(n,p)[V_i]\not\rightarrow(H,H)_v$ with probability $1-o(1)$, so that by using a different pair of colours inside each of the $k$ parts we can colour the whole of $G_n\cup G(n,p)$ without creating a monochromatic copy of $H$. However in general the randomly perturbed setting is starkly different, as shown in the work of Krivelevich, Sudakov and Tetali~\cite{kst} and Das, Morris and Treglown~\cite{dmt} discussed below --- see in particular Remark~\ref{remark: density makes a difference}.
\end{remark}


Krivelevich, Sudakov and Tetali~\cite[Theorem 2.1]{kst} studied the threshold for the appearance of a fixed subgraph $H$ in the randomly perturbed graph setting, which can be viewed as the threshold for the $(K_1, H)_v$-Ramsey property (or simply the vertex-Tur\'an property for $H$). To state their result we need some definitions. First of all, we recall that the \emph{$0$-density of $H$} is $$m(H) := \max\{e(J)/v(J):\  J \subseteq H,\  v(J) > 0\}.$$ Note that this parameter governs the appearance of the graph $H$ in $G(n,p)$, see e.g.~\cite{JLR2000}. Next we define a $k$-partite version of this parameter.
\begin{definition}
Let $H$ be a graph and $k \in \mathbb{N}$. Let $\mathrm{Part}(H,k)$ denote the collection of ordered $k$-tuples of  (possibly empty) graphs $(H_1, \ldots, H_k)$ such that there exists a $k$-partition $\sqcup_{i=1}^k V_i$ of $V(H)$ with $H[V_i]=H_i$ for each $i\in [k]$.
 We define the \emph{$k$-partite $0$-density} of $H$ to be $$m(H;k) := \min_{(H_1, \ldots, H_k) \in \mathrm{Part}(H,k)} \ \max_{i:\   H_i \neq \emptyset} m(H_i).$$
\end{definition}
\begin{remark}~\label{remark: density makes a difference}
For $H=K_4$ and $k=2$, it is not hard to check that $m(K_4;2) = m(K_2) = 1/2 < 3/2 = m(K_4)$. This example together with the result below shows that the randomly perturbed setting differs from the purely random setting in general: density does make a difference.
\end{remark}
\begin{theorem}[Krivelevich, Sudakov and Tetali~\cite{kst}]\label{thm:kst}
    Let $0 < d < 1$ and set $k \geq 2$ to be the unique integer satisfying  $\frac{k-2}{k-1} < d \leq \frac{k-1}{k}$. 
    Let $H$ be a graph with $e(H) \geq 1$. Then $$p(n; K_1, H, d) = n^{-1/m(H;k)}.$$
\end{theorem}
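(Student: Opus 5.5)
Theorem~\ref{thm:kst} has two halves, and I would prove them separately: a lower-bound construction giving part (i), and an embedding argument for part (ii). The lower bound is the easy half.

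\textbf{Lower bound.} Take $G_n$ to be the complete balanced $k$-partite graph on $n$ vertices. Its density is at least $d$ for large $n$, because $d\le (k-1)/k$ and the lost $O(1/n)$ term is absorbed by taking the $n_k$ suitably. Now add $G(n,q)$ with $q=o(n^{-1/m(H;k)})$. Suppose a copy of $H$ appears in $G_n\cup G(n,q)$. Its vertex set is split by the $k$ parts $V_1,\dots,V_k$, which induces a $k$-partition $(H[W_1],\dots,H[W_k])$ of $H$. The edges of $H$ inside each $W_i$ must come from $G(n,q)[V_i]$. By the definition of $m(H;k)$, some nonempty part $H_i=H[W_i]$ has $m(H_i)\ge m(H;k)$. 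Hence some subgraph $J\subseteq H_i$ with $e(J)/v(J)=m(H_i)$ appears in $G(n,q)$. Since $H$ has finitely many such $J$, a first moment bound finishes this half: the expected number of copies of $J$ is $O(n^{v(J)}q^{e(J)})=o(1)$. One degenerate case needs care. If $H_i$ has no edges, then $m(H_i)=0$; but $m(H;k)>0$ as soon as no $k$-partition of $H$ into independent sets exists. If $\chi(H)\le k$, then $m(H;k)=0$, the threshold is $n^{-1/0}=0$, and this is consistent with Tur\'an's theorem for graphs of density above $(k-2)/(k-1)$.

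\textbf{Upper bound.} Let $G$ have density at least $d>(k-2)/(k-1)$. By Erd\H{o}s--Stone (together with supersaturation, or directly via Szemer\'edi regularity), $G$ contains a complete $k$-partite subgraph with parts $U_1,\dots,U_k$ each of linear size $\gamma n$. Actually, I would prefer a robust version: many such blow-ups, or a regular $k$-tuple of pairs of positive density in the reduced graph.

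Fix an optimal partition $(H_1,\dots,H_k)$ attaining $m(H;k)$. Take $q=\omega(n^{-1/m(H;k)})$. Then $q=\omega(n^{-1/m(H_i)})$ for every $i$, so by the standard threshold for subgraph containment (Bollob\'as; see \cite{JLR2000}), $G(n,q)[U_i]$ contains a copy of $H_i$ with high probability, since $|U_i|$ is linear. If $U_1,\dots,U_k$ are genuinely completely joined in $G$, we simply place a copy of $H_i$ inside each $U_i$. All cross edges of $H$ are present, which gives $H$. The failure probability is $o(1)$ per part, and there are only $k$ parts.

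\textbf{Main obstacle.} The main difficulty is making the complete $k$-partite structure large. Erd\H{o}s--Stone only guarantees parts of size about $\log n$, while the random copies of $H_i$ need linear-sized sets. I would resolve this with regularity. Apply the regularity lemma and Tur\'an's theorem in the reduced graph to find clusters $X_1,\dots,X_k$ that are pairwise $\varepsilon$-regular with density at least $\delta$. Then use Janson's inequality to show that $G(n,q)[X_i]$ contains $\Omega(n^{v(H_i)}q^{e(H_i)})$ copies of $H_i$, in fact many copies spread robustly across the cluster. Finally, use a counting lemma to pick one copy per cluster so that all required cross edges are present. Concretely, I would embed the parts sequentially: restrict to vertices having typical neighbourhoods into the remaining clusters, and at each step apply Janson to a subset of linear size (a $\delta^{O(1)}$ fraction of the cluster), which regularity guarantees. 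The probability bound uses that $q$ is above the threshold for every subgraph of each $H_i$.
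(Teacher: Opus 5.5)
Your lower bound is the paper's Tur\'an-graph argument (Theorem~\ref{theorem: turan}(i) with $\mathbf{H}=(K_1,H)$), specialised to this case. Your upper-bound architecture also matches the procedure of Section~\ref{section: 1-statement}: regularity plus Tur\'an in the reduced graph give $k$ pairwise $\varepsilon$-regular dense clusters, and you place $H_1,\dots,H_k$ cluster by cluster while shrinking later clusters to common neighbourhoods. (The paper only cites this theorem, but its general argument specialises to yours.)

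However, the step you give as the concrete mechanism does not work as stated. Restricting $X_i$ to vertices with typical degree into the later clusters says nothing about the \emph{common} neighbourhood of the $v(H_i)$ vertices of the copy you then pick. Regularity only bounds the number of tuples with small common neighbourhood, and it permits such tuples among vertices of typical degree. You also cannot avoid this by choosing the copy one vertex at a time, as in the usual key-lemma embedding. The copy of $H_i$ has to be found in the sparse random graph $G(n,q)[X_i]$, where greedy vertex-by-vertex embedding fails. So as written, nothing guarantees that the next cluster shrinks only by a constant factor.

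The missing ingredient is control at the level of whole sets, which is what the paper's \emph{popular} sets provide. Apply Lemma~\ref{lem:focusandcut}(i) to the current, shrunken sets, which remain regular by Lemma~\ref{lem:focusandcut}(ii). It shows that all but a small fraction of the $v(H_i)$-subsets of the current $U_i$ have at least $(\alpha/2)^{v(H_i)}|U_{i'}|$ common neighbours in every later $U_{i'}$, where $\alpha$ is the density of the regular pairs. You then need a copy of $H_i$ in $G(n,q)[U_i]$ whose vertex set is one of these.

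This is the $K_1$-analogue of the robust statement Theorem~\ref{thm:robust1statement}, and here Janson gives it directly. Restricting to potential copies on popular sets keeps all but a small fraction of $\mu$ and does not increase $\bar{\Delta}$, so $\mu^2/\bar{\Delta}=\Omega\big(\min_{J\subseteq H_i,\, e(J)>0} n^{v(J)}q^{e(J)}\big)\to\infty$.

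You should also say why Janson may be applied even though $U_i$ and its popular family depend on earlier choices. They are determined by $G_n$ and the random edges inside the earlier clusters, which are disjoint from $X_i$, hence independent of $G(n,q)[X_i]$.

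With these repairs the proof goes through, and it is simpler than the paper's general argument. With no adversarial colouring you need neither the union bound over the $n^{sv}$ possible evolutions of $(U_1,\dots,U_k)$ nor super-polynomially small failure probabilities. One minor point: when $\chi(H)\le k$, it is Erd\H{o}s--Stone, not Tur\'an's theorem, that forces $H$ at density $d>(k-2)/(k-1)$.
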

\noindent Building on Theorem~\ref{thm:kst}, Das, Morris and Treglown initiated the study of vertex-Ramsey properties of randomly perturbed graphs, resolving the problem in the two-colour case $r=2$ when at least one of the graphs $H_1, H_2$ is a clique~\cite[Theorem 1.11]{dmt}. To state their result, it is helpful to introduce a parameter removing the need to specify $m_1(H_1)\geq m_1(H_2)$ when referring to the mixed $1$-density $m_1(H_2, H_1)$.
\begin{definition}
Given an ordered $r$-tuple of graphs $\mathbf{H}=(H_1, H_2, \ldots, H_r)$, let $\sigma$ be any permutation such that $m_1(H_{\sigma(1)}) \geq m_1(H_{\sigma(2)}) \geq \ldots \geq m_1(H_{\sigma(r)})$. We then define $$\beta (\mathbf{H})=\beta(H_1,H_2, \ldots, H_r):= m_1\left(H_{\sigma(2)}, H_{\sigma(1)}\right).$$
\noindent Further, given an $r$-tuple of families of graphs $\mathbf{\mathcal{H}}=(\mathcal{H}_1, \mathcal{H}_2, \ldots, \mathcal{H}_r)$, we define $$\beta(\mathbf{\mathcal{H}}) := \min_{H_1 \in \mathcal{H}_1,\  \ldots, \ H_r \in \mathcal{H}_r} \beta\left(H_1,H_2, \ldots, H_r\right).$$
\end{definition}
\noindent Note that $\beta(\mathbf{H})$ is well defined as, by Fact~\ref{fact: mixed density}, the value of $\beta (\mathbf{H})$ is independent of the choice of $\sigma$. Indeed, let $\sigma, \sigma'$ be permutations such that $\sigma \neq \sigma'$, $m_1(H_{\sigma(1)}) \geq m_1(H_{\sigma(2)}) \geq \ldots \geq m_1(H_{\sigma(r)})$ and $m_1(H_{\sigma'(1)}) \geq m_1(H_{\sigma'(2)}) \geq \ldots \geq m_1(H_{\sigma'(r)})$. If $m_1(H_{\sigma(1)}) = m_1(H_{\sigma(2)})$, then by Fact~\ref{fact: mixed density} we have $m_1(H_{\sigma(2)}, H_{\sigma(1)}) = m_1(H_{\sigma'(2)}, H_{\sigma'(1)}) \ (= m_1(H_{\sigma(1)}))$. Further, if $m_1(H_{\sigma(1)}) >  m_1(H_{\sigma(2)}) = m_1(H_{\sigma(3)})$, then $\sigma(1) = \sigma'(1)$ and $m_1(H_{\sigma(2)}) = m_1(H_{\sigma(3)}) = m_1(H_{\sigma'(2)}) = m_1(H_{\sigma'(3)})$. Hence by Fact~\ref{fact: mixed density} we have $m_1(H_{\sigma'(2)}, H_{\sigma'(1)}) = m_1(H_{\sigma(2)}, H_{\sigma(1)})$. 

The critical parameter governing the $(K_t, H)_v$-Ramsey property in the randomly perturbed setting is then the following.
\begin{definition}
    For $t \in \mathbb{N}$, $k \geq 2$ and a graph $H$, define $$m^*(K_t, H; k) := \max_{r_1 + \cdots + r_k \leq t-1} \ \min_{(H_1, \ldots, H_k) \in \mathrm{Part}(H,k)} \ \max_{i:\ H_i \neq \emptyset} \beta(K_{r_i + 1}, H_i),$$ 
    where the first maximum is taken over all ordered $k$-tuples $(r_1, \ldots, r_k)$
of non-negative integers summing to at most $t-1$.
\end{definition} 
\begin{theorem}[Das, Morris and Treglown~\cite{dmt}]\label{thm:dmt}
    For $0 < d < 1$, let $k \geq 2$ be the unique integer satisfying $\frac{k-2}{k-1} < d \leq \frac{k-1}{k}$.
    For any $t \in \mathbb{N}$ and graph $H$,
    we have $$p(n; K_t, H, d) = n^{-1/m^*(K_t, H; k)}.$$ 
\end{theorem}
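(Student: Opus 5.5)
The statement is the Das--Morris--Treglown theorem, so I have to show two things. First, a $0$-statement: some dense graph $G_n$ of density at least $d$ satisfies $G_n\cup G(n,q)\not\to(K_t,H)_v$ whenever $q=o(n^{-1/m^*})$. Second, a $1$-statement: every $G_n$ of density at least $d$ satisfies $G_n\cup G(n,q)\to(K_t,H)_v$ when $q=\omega(n^{-1/m^*})$. Write $m^*=m^*(K_t,H;k)$.

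\textbf{$0$-statement.} Take $G_n$ to be the complete balanced $k$-partite graph with parts $V_1,\dots,V_k$. Since $d\le\frac{k-1}{k}$, it has density at least $d$ for large $n$ (up to a negligible rounding term). Fix the $k$-tuple $(r_1,\dots,r_k)$ with $\sum r_i\le t-1$ attaining the outer maximum in $m^*$.

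Inside each part $V_i$ we must colour $G(n,q)[V_i]$ so that it has no red $K_{r_i+1}$. At the same time, the blue set $B_i\subseteq V_i$ must have $G(n,q)[B_i]$ avoid every graph in the family $\mathcal{F}_i$ of graphs $H_i$ that can appear as a part of a $k$-partition of $H$ with $\beta(K_{r_i+1},H_i)\ge m^*$. Such a colouring exists whp by a family version of the Kreuter $0$-statement (Theorem~\ref{thm:kreuter}): use the standard obstruction/grow-sequence argument of Kreuter and {\L}uczak--Ruci\'nski--Voigt, applied to the finite family.

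Now check that this colouring works.
\begin{itemize}
\item Red cliques: a red clique meets each $V_i$ in at most $r_i$ vertices, so it has at most $\sum r_i\le t-1$ vertices. Hence there is no red $K_t$.
\item Blue copies of $H$: a blue copy induces a partition $(H_1,\dots,H_k)$ with $H_i$ blue inside $V_i$. By the choice of $(r_i)$ attaining the max-min, some nonempty $H_i$ has $\beta(K_{r_i+1},H_i)\ge m^*$. That $H_i$ was forbidden in $B_i$, a contradiction.
\end{itemize}
The degenerate cases need separate care: $r_i=0$ (colour all of $V_i$ blue, which needs $H_i$-freeness, the Krivelevich--Sudakov--Tetali regime of Theorem~\ref{thm:kst}) and $H_i$ a matching. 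In these the threshold is only $o(\cdot)$, which is exactly what the definition of the threshold allows.

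\textbf{$1$-statement.} Given $G_n$ of density at least $d>\frac{k-2}{k-1}$, apply the regularity lemma. The reduced graph has density above $\frac{k-2}{k-1}$, so by Tur\'an it contains a $K_k$ of regular pairs with positive density on clusters $U_1,\dots,U_k$.

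Fix a red/blue colouring and assume there is no red $K_t$. Greedily choose, in each cluster, the largest $r_i$ for which the red part of $U_i$ (restricted to suitably typical vertices) still contains many red $K_{r_i}$'s that extend into red cliques across clusters. The aim is that red cliques can be glued across regular pairs (dense in $G_n$), so that whenever red is robust in each cluster we get $\sum r_i\le t-1$. Otherwise some $U_i$ has a linear-size set, red-$K_{r_i+1}$-free in $G(n,q)$, containing vertices with common dense neighbourhoods.

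Then apply the Kreuter $1$-statement (Theorem~\ref{thm:kreuter}), in a robust, \emph{supersaturated} form via hypergraph containers, to $G(n,q)[U_i]$ restricted to these sets. Since $q=\omega(n^{-1/\beta(K_{r_i+1},H_i)})$ for the partition minimising the max, each $U_i$ contains many blue copies of $H_i$. Using the dense regular pairs, embed the pieces $H_1,\dots,H_k$ compatibly into a blue $H$ by a counting/embedding lemma.

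\textbf{Main obstacle.} The main obstacle is the $1$-statement, specifically making the choice of $(r_1,\dots,r_k)$ depend on the colouring while keeping everything simultaneous across clusters. This needs a union bound over colourings, so I would use the container method to get a ``robust'' Kreuter statement: whp, every subset of $U_i$ of linear size with no red $K_{r_i+1}$ contains many blue copies of $H_i$ spread over common neighbourhoods. After that, the gluing is a standard regularity embedding.
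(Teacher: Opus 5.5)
Your $0$-statement is correct. It is exactly the paper's argument for Theorem~\ref{theorem: turan}(i) specialised to $(K_t,H)$, with Proposition~\ref{prop:0family} as input; the certificate for $K_t$ in part $i$ is the set of cliques of order greater than $r_i$.

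The gap is in the $1$-statement, at the step you flag. You plan to apply Kreuter to a colouring-dependent linear-size set that is red-$K_{r_i+1}$-free. For this you use the claim that whp \emph{every} linear-size subset of $U_i$ with no red $K_{r_i+1}$ contains many blue copies of $H_i$. That claim is false whenever $m^*<m_1(K_{r_i+1})=(r_i+1)/2$:
\begin{itemize}
\item Take $q=n^{-1/m^*}\omega$ with $\omega\to\infty$ slowly. Then whp $G(n,q)$ has only $n^{(r_i+1)-\binom{r_i+1}{2}/m^*+o(1)}=o(n)$ copies of $K_{r_i+1}$.
\item Deleting one vertex from each copy leaves a $K_{r_i+1}$-free set of linear size.
\item Colouring that set entirely red gives no red $K_{r_i+1}$ and no blue vertex at all.
\end{itemize}
This case really arises in your argument. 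For $t=10$, $H=K_2$, $k=2$ one finds $m^*=11/5$, attained at $(r_1,r_2)=(5,4)$. For this tuple the only partition with all $\beta(K_{r_i+1},H_i)\le m^*$ is $(\emptyset,K_2)$. So you would need the claim for $(K_5,K_2)$ at $q=n^{-5/11+o(1)}$, where there are only $n^{5/11+o(1)}$ copies of $K_5$.

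This is the basic asymmetric phenomenon. At the threshold the denser graph has only about $n^{\ell}$ disjoint copies, with $\ell<1$. Accordingly, the proof of Theorem~\ref{thm:robust1statement} only gives failure probabilities $\exp(-\Omega(n^{c}))$ with $c\le\ell<1$. No union bound over all $2^n$ subsets, with containers or without, can yield a hereditary version. Conversely, uniformity over colourings is not the real difficulty: Theorem~\ref{thm:robust1statement} already holds for all colourings simultaneously, via transversals of those $n^{\ell}$ copies.

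The missing idea is that the sets in which Kreuter is invoked must be fixed in advance. They are iterated common $G_n$-neighbourhoods, inside the clusters, of $O(1)$ previously chosen vertices. Hence they are determined by $G_n$ alone and number at most $n^{O(1)}$, well within the $\exp(n^{c_0})$ allowed by Theorem~\ref{thm:robust1statement}.

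This also forces the gluing to be sequential. A counting lemma applied to ``many blue copies'' does not work, since those copies form a sparse, colouring-dependent family that regularity does not control. Instead, take a monochromatic copy on a popular set in the current $U_i$, shrink each later $U_{i'}$ to its common neighbourhood, and continue inside that set.

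Popularity is measured relative to the shrinking sets, so the admissible families must consist of sets popular at every stage of the evolution. By Lemma~\ref{lem:focusandcut} these miss at most a $\delta$-fraction of the $v$-sets, which is what Theorem~\ref{thm:robust1statement} requires. The same requirement is what lets the maximality of $r_i$, established at the red step before later shrinkings, exclude red admissible copies of $K_{r_i+1}$ in the final set. The robust theorem must then return a blue admissible copy of $H_i$.

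With these repairs, your red-first greedy choice of $(r_1,\dots,r_k)$ followed by the max--min choice of partition is a valid route, essentially that of Das, Morris and Treglown. The paper instead obtains the statement as the case $\mathbf{H}=(K_t,H)$ of Theorem~\ref{theorem: main}. Its procedure runs through all partitions of both graphs and records failures as certificates, so no $(r_i)$ ever needs to be chosen and both colours are treated alike.
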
 
\noindent Note that since $\beta (K_1, H)=m(H)$ for any graph $H$, the case $t=1$ of Theorem~\ref{thm:dmt} coincides precisely with Theorem~\ref{thm:kst}, as we should expect.

\subsection{Contributions of this paper}
In their concluding remarks, Das, Morris and Treglown~\cite[Section 4]{dmt} wrote that  ``the most pressing problem that remains open is to extend our results to all pairs of graphs $(F, H)$ with the symmetric case $F = H$ of particular interest''. Our contribution in this paper is to determine a threshold for the multicolour asymmetric vertex-Ramsey property in randomly perturbed dense graphs, resolving this problem in full generality. To state our results, we must make some definitions. 
\begin{definition}
Let $H$ be a graph, and let $k\geq 2$. We define a \emph{$k$-partite certificate of $H$-freeness} $\mathcal{F}(H;k)=(\mathcal{F}_1, \mathcal{F}_2, \ldots, \mathcal{F}_k)$ to be an (ordered) $k$-tuple of collections of subgraphs of $H$
such that for each $k$-tuple $(H_1,  \ldots, H_k) \in \mathrm{Part}(H,k)$, 
there is some $j$ such that $H_j\in \mathcal{F}_j$. 
\end{definition}
\noindent We note that a similar notion appears in the concluding remarks of~\cite{dmt} under the name `$k$-cover of $H$'; see also the lower bound on the threshold for the asymmetric vertex-Ramsey property given in~\cite[Inequality (4.3)]{dmt}. 
Next, given an ordered $r$-tuple of graphs $\mathbf{H}=(H_1, \ldots, H_r)$ for some $r\geq 2$, and an integer $k\geq 2$, we set
\begin{align}\label{eq: critical threshold}
\beta(\mathbf{H}; k):= \underset{\mathcal{F}(H_1;k),\ldots, \mathcal{F}(H_r;k)}{\max}\ \underset{i\in [k]}{\min}\  \beta\left(\mathcal{F}(H_1;k)_i,\ldots,  \mathcal{F}(H_r;k)_i\right),
\end{align}
where the maximum is taken over all ordered $r$-tuples $(\mathcal{F}(H_1;k), \ldots , \mathcal{F}(H_r;k))$ of $k$-partite certificates of $H_1$-, $H_2$-, $\ldots$, $H_r$-freeness.
\begin{remark}
The quantity $\beta$ is computable in principle. Indeed, 
there are at most $2^{k\cdot{2}^ {\vert V(H)\vert}}$ many $k$-partite certificates of $H$-freeness so the maximum in~\eqref{eq: critical threshold} is taken over a finite set, and the Kreuter threshold inherent in the parameter $\beta\big(\mathcal{F}(H_1;k)_i, \ldots,  \mathcal{F}(H_r;k)_i\big)$ is itself a maximum over finitely many pairs of finite graphs of a maximum over their finitely many subgraphs. However in practice computing $\beta(\mathbf{H}; k)$ is not a tractable task unless the graphs $H_1, \ldots, H_r$ are of small order 
or have a very simple structure, such as being cliques. 
\end{remark}
\begin{remark}
While there are superficial differences in the way we have presented these parameters, it is easily checked that $m^{*}(K_t, H; k)=\beta((K_t, H); k)$, and the reader is encouraged to verify this in order to check their understanding of the crucial definition~\eqref{eq: critical threshold}.
\end{remark}
\begin{theorem}\label{theorem: main}
Let $\mathbf{H}=(H_1, H_2, \ldots H_r)$ be an $r$-tuple of graphs for some $r\geq 2$. Let $d\in (0,1)$ be fixed with $\frac{k-2}{k-1} < d \leq \frac{k-1}{k}$. Then 
\begin{align*}
p(n; \mathbf{H}, d)= n^{-1/\beta(\mathbf{H}; k)}. 
\end{align*}
\end{theorem}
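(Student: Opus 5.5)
We prove the two directions separately, writing $\beta^*:=\beta(\mathbf{H};k)$ throughout.

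\textbf{Lower bound (0-statement).} Fix an $r$-tuple of certificates $(\mathcal{F}(H_1;k),\ldots,\mathcal{F}(H_r;k))$ attaining the maximum in~\eqref{eq: critical threshold}. Take $G_n$ to be the complete balanced $k$-partite graph with parts $V_1,\ldots,V_k$; its density is at least $d$ because $d\le (k-1)/k$, up to lower-order terms handled by the choice of $n_k$. Let $q=o(n^{-1/\beta^*})$. For each part $V_i$ we have $\beta(\mathcal{F}(H_1;k)_i,\ldots,\mathcal{F}(H_r;k)_i)\ge\beta^*$, so there are $F_{1}^{(i)}\in\mathcal{F}(H_1;k)_i,\ldots,F_{r}^{(i)}\in\mathcal{F}(H_r;k)_i$ with $\beta(F_1^{(i)},\ldots,F_r^{(i)})\ge\beta^*$. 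By the 0-statement of Theorem~\ref{thm:kreuter}, applied inside $V_i$ (where $G(n,q)[V_i]\sim G(n/k,q)$), with high probability there is a colouring of $V_i$ with no copy of $F_j^{(i)}$ monochromatic in colour $j$. A union bound over the $k$ parts gives such colourings of all parts simultaneously.

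We then argue that the union of these colourings has no monochromatic $H_j$ in colour $j$. Suppose a copy of $H_j$ is coloured $j$. It induces some element of $\mathrm{Part}(H_j,k)$ via the parts, so by the certificate property some piece $H_j[V_i]$ lies in $\mathcal{F}(H_j;k)_i$. That piece is then monochromatic in colour $j$ inside $V_i$. The certificate as stated only says some $F\in\mathcal{F}_i$ appears, not our particular chosen $F^{(i)}_j$. So we must instead use a 0-statement for \emph{families}: avoid every member of $\mathcal{F}(H_j;k)_i$ in colour $j$, at threshold $\beta(\mathcal{F}_1,\ldots,\mathcal{F}_r)$, which is the minimum over choices. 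Since the minimum is taken over members, the maximal-density obstruction is governed by the best choice.

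The key technical step is a family version of Kreuter's 0-statement. Since we may only use Theorem~\ref{thm:kreuter}, we need the reduction: if some choice $(F_1,\ldots,F_r)$ attains $\beta(\mathcal{F}_i)\ge\beta^*$ and $q\ll n^{-1/\beta^*}$, then $G(n,q)$ admits a colouring avoiding all of $\mathcal{F}_j$ in colour $j$. Theorem~\ref{thm:kreuter} does not give this directly. I would re-run Kreuter's greedy/deletion 0-argument: below the threshold, every "dense cluster" of overlapping copies is sparse enough to be coloured by peeling vertices of low degree in the hypergraph of copies. This is the step I expect to be the main obstacle, together with the degenerate cases: matchings, $r=2$, and empty members where $\beta$ is $0$. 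In the degenerate cases $n^{-1/0}=0$ conventions and the factor $q=o(\cdot)$ rather than $c\,n^{-1/\beta}$ matter.

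\textbf{Upper bound (1-statement).} Let $G$ have density $d>(k-2)/(k-1)$ and $q=\omega(n^{-1/\beta^*})$. Apply Szemerédi's regularity lemma and Turán's theorem on the reduced graph to find $k$ clusters $U_1,\ldots,U_k$ that are pairwise $\varepsilon$-regular with positive density. Fix an arbitrary $r$-colouring of the vertices. For each $i$, look at the colour classes in $U_i$ and consider, for each colour $j$, the family $\mathcal{F}_{j,i}$ of subgraphs $F\subseteq H_j$ that appear monochromatically in colour $j$ in "robustly many" ways within $U_i$ (using random edges, via a supersaturated/robust Kreuter 1-statement with the Janson inequality and a union bound over colourings, as in~\cite{dmt}).

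Suppose that for some $j$ the tuple $(\mathcal{F}_{j,1},\ldots,\mathcal{F}_{j,k})$ fails to be a certificate of $H_j$-freeness. Then some partition $(P_1,\ldots,P_k)\in\mathrm{Part}(H_j,k)$ has every $P_i$ found robustly in colour $j$ in $U_i$, and the regular dense bipartite pairs of $G$ between the $U_i$ glue these pieces into a copy of $H_j$ via a counting/embedding lemma. Otherwise all the complementary families form certificates, so by the definition of $\beta^*$ some $i$ has $\beta(\text{complements at }U_i)\le\beta^*$. Above the threshold, the robust Kreuter 1-statement then forces in $U_i$ some colour $j$ with a robust copy of a member missing from its family, a contradiction. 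Making "robust" precise, so that the regularity gluing works uniformly over all $r^n$ colourings, is a second substantial difficulty. I would handle it with a hypergraph-container or sparse-regularity transfer argument.
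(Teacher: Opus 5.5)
Your 0-statement follows the paper's route (Theorem~\ref{theorem: turan}(i)): take $T_k(n)$, fix maximising certificates, and in each part apply a family version of Kreuter's 0-statement. The paper does not re-prove that family version; it imports it as Proposition~\ref{prop:0family} from Das--Morris--Treglown. So what you flag as the main obstacle is a cited tool.

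\textbf{The genuine gap is in the 1-statement, at the gluing step.} You define, for the fixed colouring, the pieces ``found robustly'' in each cluster $U_i$ independently of the other clusters. You then appeal to a counting/embedding lemma to assemble $P_1,\dots,P_k$ into $H_j$. But the edges of $H_j$ between different parts come from $G_n$. So you need colour-$j$ sets $X_1,\dots,X_k$, with $X_i$ spanning $P_i$ in $G(n,p)$, such that $X_{i'}$ lies in the common $G_n$-neighbourhood of $X_i$ for all $i<i'$.

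The colouring is chosen adversarially after $G(n,p)$ is revealed. Nothing in your definition stops every colour-$j$ copy of $P_2$ in $U_2$ from lying outside the common neighbourhood of every usable $X_1$. No dense counting lemma applies, since the copies live in a sparse random graph and are selected by the colouring. Containers or sparse regularity do not help either: the issue is where the copies sit, not how many there are.

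Strengthening ``found'' to ``found inside every large subset of $U_i$'' would make gluing work, but it breaks the other direction. The complementary families would then be witnessed by different subsets for different colours and pieces, so there is no single set on which to apply the robust 1-statement. A union bound over all linear-sized subsets is also far beyond the $\exp(n^{c_0})$ target sets that Theorem~\ref{thm:robust1statement} allows. (Separately, your sentence supposing that $(\mathcal{F}_{j,1},\dots,\mathcal{F}_{j,k})$ fails to be a certificate should refer to the complementary families.)

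\textbf{The missing idea is to make the families depend on earlier choices through nested common neighbourhoods.} For each $j$ and each $(P_1,\dots,P_k)\in\mathrm{Part}(H_j,k)$, sweep $i=1,\dots,k$. Look in the current $U_i$ for a colour-$j$ set $X_i$ with $P_i\subseteq G(n,p)[X_i]$ that is popular, i.e.\ has at least $(\alpha/2)^{|X_i|}|U_{i'}|$ common $G_n$-neighbours in every later $U_{i'}$. If it exists, replace each later $U_{i'}$ by that common neighbourhood, never resetting. Otherwise add $P_i$ to $\mathcal{F}(H_j;k)_i$. A sweep with no failures yields a copy of $H_j$ outright.

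Otherwise you end with certificates for every $j$, and the following hold.
\begin{enumerate}[(1)]
\item There are at most $s=k\sum_j|\mathrm{Part}(H_j,k)|$ shrinkings, so each final $\widetilde U_i$ has size at least $(\alpha/2)^{sv}\eta n$.
\item Lemma~\ref{lem:focusandcut} shows that all but a $\delta$-fraction of small subsets of $\widetilde U_i$ are popular at every stage.
\item The final state is determined by at most $sv$ chosen vertices, so there are at most $n^{sv}$ possible final states.
\end{enumerate}
Apply Theorem~\ref{thm:robust1statement} to all these final states at once. Use the ever-popular sets as the allowed families, and range over the finitely many choices of $i_0$ and $(H_j)_{i_0}\in\mathcal{F}(H_j;k)_{i_0}$ with $\beta((H_1)_{i_0},\dots,(H_r)_{i_0})\le\beta(\mathbf{H};k)$. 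This gives, with probability $1-o(1)$ and for every colouring, some $j$ and an ever-popular colour-$j$ set in $\widetilde U_{i_0}$ spanning $(H_j)_{i_0}$.

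Since $\widetilde U_{i_0}$ lies inside every earlier version of $U_{i_0}$, that set would have been found when $(H_j)_{i_0}$ was added to the certificate, a contradiction. The nesting is what makes gluing automatic while keeping the number of target sets polynomial.
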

\noindent Before proving Theorem~\ref{theorem: main} in full generality, we shall prove it in the special case when the randomly perturbed graph is a Tur\'an graph:
\begin{theorem}\label{theorem: turan}
Let $T_{k}(n)$ denote the complete balanced $k$-partite graph on $n$ vertices. Consider $G=T_{k}(n)\cup G_{n,p}$. Then for any $r\geq 2$ fixed and any $r$-tuple of graphs $\mathbf{H}=(H_1, H_2, \ldots, H_r)$ there exists a constant $C>0$ such that the following holds:
\begin{enumerate}[(i)]
    \item if $p =o(n^{-1/\beta(\mathbf{H}; k)}) $ then with probability $1-o(1)$ there exists an $r$-colouring of the vertices of $G$ such that for every $j\in [r]$ there is no monochromatic copy of $H_j$ in colour $j$;
    \item if $p>C n^{-1/\beta(\mathbf{H}; k)} $, then with probability $1-o(1)$ in every $r$-colouring of the vertices of $G$, there is some $j\in [r]$ such that $G$ contains a monochromatic copy of $F_j$ in colour $j$.
\end{enumerate}
\end{theorem}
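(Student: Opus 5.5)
Write $\beta:=\beta(\mathbf{H};k)$ and let $V_1,\dots,V_k$ be the parts of $T_k(n)$. The point is that the dense part is complete $k$-partite. Hence for any colouring $c$, a colour-$j$ copy of $H_j$ in $G$ is the same thing as a choice of $(H_j^{(1)},\dots,H_j^{(k)})\in\mathrm{Part}(H_j,k)$ together with vertex-disjoint colour-$j$ copies of $H_j^{(i)}$ inside $G(n,p)[V_i]$, for each $i$. All cross edges are present, and there are no non-edges across parts, so the pieces glue freely. Consequently $G$ is \emph{not} $(H_1,\dots,H_r)_v$-Ramsey exactly when there is a colouring in which, for every $j$, the family $\mathcal{F}_j^{(i)}$ of subgraphs of $H_j$ appearing (as colour-$j$ copies, suitably disjoint) in $V_i$ leaves some part of every partition uncovered. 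This ties both directions to certificates of freeness.

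\textbf{0-statement.} Fix certificates $\mathcal{F}(H_1;k),\dots,\mathcal{F}(H_r;k)$ attaining the maximum in \eqref{eq: critical threshold}. Then for every $i\in[k]$ we have $\beta(\mathcal{F}(H_1;k)_i,\dots,\mathcal{F}(H_r;k)_i)\ge\beta$. So for each $i$ and every choice $F_j\in\mathcal{F}(H_j;k)_i$, we get $\beta(F_1,\dots,F_r)\ge\beta$. Since $p=o(n^{-1/\beta})$, the $0$-statement of Theorem~\ref{thm:kreuter}, applied to $G(n,p)[V_i]$ (a copy of $G(n/k,p)$), should give a colouring of $V_i$ with no colour-$j$ copy of any member of $\mathcal{F}(H_j;k)_i$.

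This needs a \emph{family} version of Kreuter's $0$-statement: simultaneously avoid colour-$j$ copies of every member of a finite family $\mathcal{F}_j$, below $n^{-1/\beta(\mathcal{F}_1,\dots,\mathcal{F}_r)}$. Kreuter's proof adapts by choosing, for each $j$, the member $F_j$ of least $1$-density, and otherwise following the standard argument (sparse random graphs below the threshold have only local structure that can be coloured greedily). Matchings and the $o(\cdot)$ caveat are handled as in the discussion after Theorem~\ref{thm:lrv}. The graphs $\mathcal{F}_j$ are closed under taking supergraphs within $H_j$ only in effect, and avoiding the minimal-density member is not enough, so I would take the family version as the key lemma.

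Gluing the $k$ colourings gives the required colouring of $G$. Indeed, any colour-$j$ copy of $H_j$ induces a partition, some part of which lies in $\mathcal{F}(H_j;k)_i$ and appears in colour $j$ in $V_i$, a contradiction.

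\textbf{1-statement.} Suppose $p>Cn^{-1/\beta}$ and fix any colouring. For each $i$ and $j$, let $\mathcal{F}_j^{(i)}$ be the set of subgraphs $F\subseteq H_j$ with no colour-$j$ copy in $V_i$. Suppose, for contradiction, that no $H_j$ is monochromatic in colour $j$. Then each $(\mathcal{F}_j^{(i)})_{i}$ is a certificate of $H_j$-freeness. This needs care: the copies in different parts are automatically disjoint, and within a part we only need a single piece. Hence some $i$ has $\beta(\mathcal{F}_1^{(i)},\dots,\mathcal{F}_r^{(i)})\le\beta$. So there are $F_j\in\mathcal{F}_j^{(i)}$ with $\beta(F_1,\dots,F_r)\le\beta$, while $G(n,p)[V_i]$ has a colouring avoiding colour-$j$ copies of $F_j$ for all $j$. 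This contradicts the $1$-statement of Theorem~\ref{thm:kreuter}, provided the contradiction holds simultaneously for all (finitely many) tuples $(i;F_1,\dots,F_r)$ with $\beta(F_1,\dots,F_r)\le\beta$. A union bound over these finitely many events does this, each being whp.

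Degenerate tuples need separate checks. These include an empty $F_j$, which is always present, so it never lies in $\mathcal{F}_j^{(i)}$ unless there is a null graph convention; $F_j$ edgeless, where Kreuter requires $e\ge1$ for two graphs and the statement is then trivial by pigeonhole on $|V_i|$; and the matching exception, where $C$ large and $K_3$'s suffice.

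\textbf{Main obstacle.} The main obstacle is the family version of Kreuter's $0$-statement used in (i). I would first try reducing it to Theorem~\ref{thm:kreuter} by showing that the optimal certificates may be taken upward-closed within each $H_j$, and then applying Kreuter's argument to the minimal elements.
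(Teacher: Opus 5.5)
Your argument is the paper's argument in both directions. For (i), the paper likewise fixes maximising certificates, colours each $V_i$ using the family version of Kreuter's $0$-statement, and glues. That family statement is exactly Proposition~\ref{prop:0family}, which the paper imports from Das, Morris and Treglown (with the routine extension to $r$ colours). So the ``main obstacle'' you flag is used there as a black box, and no upward-closure reduction is needed. For (ii), the paper also reads off certificates from the pieces that have no colour-$j$ copy in $V_i$, takes an index $i_0$ attaining $\min_i\beta$, and contradicts Kreuter's $1$-statement after a union bound over the finitely many certificate tuples. It chooses the minimising members $F_j$ implicitly, where you do so explicitly.

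Your remarks on degenerate tuples, however, are not correct. The paper does not address these cases either; it simply invokes Theorem~\ref{thm:kreuter} without checking its hypotheses.
\begin{itemize}
\item \emph{Edgeless case.} Suppose the second-densest member of the minimising tuple is edgeless but the densest one, $F$, is not. Then $\beta(F_1,\ldots,F_r)=m(F)$, and pigeonhole only reduces you to finding a copy of $F$ in $G(n,p)[V_i]$ with $O(1)$ vertices deleted. That is an appearance problem with a coarse threshold. This already happens for $\mathbf{H}=(K_2,K_2)$ and $k=2$. Here $\beta(\mathbf{H};2)=1/2$, attained at the pair $(K_2,K_1)$, and $G$ is Ramsey if and only if $G(n,p)$ has an edge inside a part. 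For $p=Cn^{-2}$ this has probability only $1-e^{-\Theta(C)}$, so (ii) with a fixed constant $C$ is actually false there.
\item \emph{Matching case.} At $p=Cn^{-1}$ the triangle count is asymptotically Poisson with bounded mean. So triangles do not give probability $1-o(1)$ when both graphs are matchings; many disjoint odd cycles in the giant component would.
\end{itemize}
The sound repair is to prove (ii) for $p=\omega(n^{-1/\beta(\mathbf{H};k)})$, which is all Theorem~\ref{theorem: main} requires. In that regime $G(n,p)[V_i]$ whp contains any fixed number of vertex-disjoint copies of $F$ (or of $K_3$), and your pigeonhole arguments go through.
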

\noindent Theorem~\ref{theorem: turan} has the double virtue of having a rather simple and transparent proof, 
giving the intuition as to why the claimed threshold in Theorem~\ref{theorem: main} is the correct one, 
and of implying the $0$-statement of Theorem~\ref{theorem: main}. We prove Theorem~\ref{theorem: turan} in Section~\ref{section: turan},
before moving on to the more technical proof of the $1$-statement of Theorem~\ref{theorem: main} in Section~\ref{section: 1-statement}, 
which relies on regularity methods and robust versions of Theorem~\ref{thm:kreuter} similar to the ones employed by Das, Morris and Treglown.

\subsection{Notation}

Throughout we omit floors and ceilings where they are not pertinent to the arguments. For a graph $G$, we write $v(G)$ and $e(G)$ for the number of vertices and of edges in $G$, respectively. We say a graph is non-empty if it has at least one vertex. Given a set of vertices $V'$ in $V(G)$ write $G[V']$ for the subgraph of $G$ induced by $V'$. We write $H\subseteq G$ if $H$ is a subgraph of $G$, and, for graphs $G$ and $G'$ on the same vertex set we write $G\cup G'$ for their union. Given a set $X$ and $t \in \mathbb{N}$, we write $\binom{X}{t}$ for the set of all subsets of $X$ of size $t$. If $X$ is a set of vertices and $F$ is a graph, we write $\binom{X}{F}$ for the set of all possible copies of $F$ supported on vertices in $X$; we consider two possible copies of $F$ to be different if they have distinct sets of edges. We also write $\binom{G}{F}$ for the set of copies of $F$ in a graph $G$ (so that $\binom{X}{F}$ defined previously is precisely $\binom{G}{F}$ for $G$ the complete graph on the vertex set $X$).
Throughout the paper, we use standard Landau notation. 
\section{Tools and useful results}
We will need the following generalisation of the 0-statement of Theorem~\ref{thm:kreuter}. 
\begin{proposition}\label{prop:0family}
    Let $(\mathcal{H}_1, \ldots, \mathcal{H}_r)$ be an $r$-tuple of finite families of non-empty graphs. If $p = o(n^{-1/\beta(\mathcal{H}_1, \ldots, \mathcal{H}_r)})$, then with probability $1-o(1)$ there is an $r$-colouring of the vertices of $G(n,p)$ 
    without a monochromatic copy of any $H_j \in \mathcal{H}_j$ in colour $j$ for any $j\in [r]$.
\end{proposition}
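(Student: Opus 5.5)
The plan is to reduce to the $0$-statement of Theorem~\ref{thm:kreuter} applied to one well-chosen $r$-tuple of graphs, using monotonicity. Since the families are finite, the minimum defining $\beta(\mathcal{H}_1, \ldots, \mathcal{H}_r)$ is attained by some $(H_1^*, \ldots, H_r^*)$ with $H_j^* \in \mathcal{H}_j$. Then $p = o(n^{-1/\beta(H_1^*, \ldots, H_r^*)})$. It therefore suffices to find, with probability $1-o(1)$, an $r$-colouring of $G(n,p)$ with no copy of $H_j^*$ in colour $j$ for every $j$. Such a colouring automatically avoids every $H_j\in\mathcal{H}_j$ in colour $j$? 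No, that is false in general: avoiding $H_j^*$ does not mean avoiding other members of $\mathcal{H}_j$. So the argument has to handle all members of each family at once.

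The fix is to work with a single family-aware colouring. Order the colours so that $\max_{H\in\mathcal{H}_{1}} \min$-type quantities are arranged appropriately. Concretely, for each $j$ choose $F_j \in \mathcal{H}_j$ minimising $m_1$ over $\mathcal{H}_j$. Any colouring with no monochromatic copy of a subgraph of every member behaves well here; for instance, avoiding a colour-$j$ copy of a graph $F$ with $F\subseteq H$ avoids $H$. The cleanest route is the following.

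\begin{enumerate}
\item Relabel so that $m_1(F_1)\ge m_1(F_2)\ge\cdots\ge m_1(F_r)$. Check that $\beta(F_1,\ldots,F_r)\le\beta(H_1,\ldots,H_r)$ for every choice $H_j\in\mathcal{H}_j$, so that $\beta(F_1,\ldots,F_r)=\beta(\mathcal{H})$. This should follow from monotonicity of the Kreuter threshold. We need $m_1(H_2,H_1)$ to be nondecreasing in $m_1(H_2)$ and in $H_1$, and we need the observation that replacing each $H_j$ by the $m_1$-minimiser of its family can only decrease the top two values in the sorted order.
\item The critical step is this monotonicity claim. The issue is that $m_1(F_2,F_1)$ depends on $F_1$ through its subgraphs, not only through $m_1(F_1)$. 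So minimising $m_1$ within $\mathcal{H}_1$ need not minimise the Kreuter threshold. I would therefore choose the tuple directly as a minimiser $(H_1^*,\ldots,H_r^*)$ of $\beta$, and then pass to a refined colouring.
\item For the refined colouring, I would use the standard $0$-statement proof of Kreuter's theorem rather than its statement. That proof gives a colouring in which colours $3,\ldots,r$ receive essentially nothing dangerous. Colour $1$ (the densest) and colour $2$ are handled by a greedy or structural argument: below the threshold, all components of the union of copies of the relevant graphs are sparse and have bounded size. Those components can then be coloured by hand.
\end{enumerate}

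Because components are bounded and finitely many graph types occur, the same argument avoids every member of a finite family simultaneously. First-moment counts of dense configurations are done over all members of all families, which costs only a constant factor.

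The main obstacle is step 3: adapting the structural (deterministic colouring of sparse clusters) part of Kreuter's $0$-statement to families. I would try to show that every cluster arising from copies of graphs in $\mathcal{H}_1\cup\mathcal{H}_2$ has density below the corresponding threshold. This should allow a colouring that uses colour $2$ on one vertex per $\mathcal{H}_1$-copy, while keeping colour-$2$ vertices free of $\mathcal{H}_2$-copies.
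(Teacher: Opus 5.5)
Your overall strategy is the one the paper relies on: colour with only two colours, and rerun Kreuter's $0$-statement argument with finite families in place of single graphs. But the step that decides \emph{which} two colours to use is missing, and the substitute you settle on can fail.

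The paper's reduction is the identity $\beta(\mathcal{H}_1,\ldots,\mathcal{H}_r)=\max_{i_1<i_2}\beta(\mathcal{H}_{i_1},\mathcal{H}_{i_2})$. You colour only with a maximising pair, leave every other colour class empty, and apply the two-colour family statement (Proposition~2.1 of Das, Morris and Treglown~\cite{dmt}). The inequality actually needed is easy once you keep whole families rather than single graphs. Set $\mu_j:=\min_{H\in\mathcal{H}_j}m_1(H)$ and let $a\neq b$ carry the two largest values of $\mu_j$. For any $H_a\in\mathcal{H}_a$, $H_b\in\mathcal{H}_b$, complete $(H_a,H_b)$ with $m_1$-minimisers in the other coordinates. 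These have $1$-density at most $\min\{m_1(H_a),m_1(H_b)\}$, so by Fact~\ref{fact: mixed density} (ties are harmless) the tuple has $\beta$-value $\beta(H_a,H_b)$. Hence $\beta(\mathcal{H})\le\beta(\mathcal{H}_a,\mathcal{H}_b)$, and so $p=o(n^{-1/\beta(\mathcal{H}_a,\mathcal{H}_b)})$.

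Your Step~1 relabelling by the minimisers $F_j$ selects exactly this pair. What is false is only the attempt to replace the families by the single tuple $(F_j)$. Your Step~2 choice, the two densest colours of a $\beta$-minimising tuple, can pick a wrong pair. Take $\mathcal{H}_1=\{K_4\}$, $\mathcal{H}_2=\{K_3,K_2\}$, $\mathcal{H}_3=\{K_3\}$. The tuple $(K_4,K_3,K_3)$ is a minimiser with $\beta=m_1(K_3,K_4)=15/8$, and its two densest entries may be read as colours $1,2$. Yet a colouring using only colours $1,2$ needs colour $2$ independent and colour $1$ $K_4$-free. Since $m_1(K_2,K_4)=7/4$, Theorem~\ref{thm:kreuter} makes this impossible with probability $1-o(1)$ once $p\ge Cn^{-4/7}$, although $n^{-4/7}=o(n^{-8/15})$. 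The right pair here is $\{1,3\}$, since $\mu=(2,1,3/2)$.

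Second, the two-colour family statement is where all the probabilistic work lies, and you leave it as an open ``main obstacle''. Moreover, the mechanism you sketch for it fails in the unbalanced case $\mu_a>\mu_b$. There Fact~\ref{fact: mixed density} gives $\beta(H_a,H_b)>\mu_b$ for all $H_a\in\mathcal{H}_a$, $H_b\in\mathcal{H}_b$, so $p$ may lie polynomially above the $1$-density threshold of the sparser family.

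Concretely, for $\mathcal{H}_a=\{K_4\}$, $\mathcal{H}_b=\{K_3\}$ the threshold is $n^{-8/15}$. At $p=n^{-3/5}=o(n^{-8/15})$ there are about $n^3p^3=n^{6/5}$ triangles. Components of bounded size would carry only $O(n)$ copies, so the union of copies of graphs from $\mathcal{H}_a\cup\mathcal{H}_b$ cannot split into bounded components. Your ``colour the bounded clusters by hand'' step is therefore unavailable.

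What has to be proved is that some vertex set meets every copy of every member of $\mathcal{H}_a$ while inducing no copy of any member of $\mathcal{H}_b$. Establishing this is precisely the content of Kreuter's $0$-statement argument. The paper does not reprove it: it imports the $r=2$ family case from Das, Morris and Treglown, who note that its proof is nearly identical to Kreuter's. To complete your proposal you need the reduction above, together with either that citation or a genuine family version of Kreuter's argument.
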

\noindent Das, Morris and Treglown stated the $r=2$ case of Proposition~\ref{prop:0family} in~\cite[Proposition 2.1]{dmt}, commenting that `the proof of Proposition~\ref{prop:0family} is nearly identical to the proof of Theorem~\ref{thm:kreuter}\ldots' and providing a sketch proof. The extension of ~\cite[Proposition 2.1]{dmt} to the general $r$ case is trivial (since in fact $\beta(\mathcal{H}_1,  \ldots, \mathcal{H}_r)=\max_{i_1<i_2}\beta(\mathcal{H}_{i_1},  \mathcal{H}_{i_2})$  and it thus suffices to find a colouring using only some pair of colours $j_1$ and $j_2$ without any monochromatic copy of $H_{j_1}\in \mathcal{H}_{j_1}$ in colour $j_1$ or monochromatic copy of  $H_{j_2}\in \mathcal{H}_{j_2}$ in colour $j_2$). Proposition~\ref{prop:0family} is stated with $p = o(n^{-1/\beta(\mathcal{H}_1,\ldots,\ \mathcal{H}_r)})$ instead of
$p \leq cn^{-1/\beta(\mathcal{H}_1,\ldots,\ \mathcal{H}_r)}$
for some sufficiently small constant $c$ so that if $\mathcal{H}_1$ and $\mathcal{H}_2$ contain matchings 
the conclusion still holds (see the discussion after Theorem~\ref{thm:lrv}).

While in Theorem~\ref{theorem: turan} we work with a complete $k$-partite graph $G_n = T_k(n)$, in the proof of Theorem~\ref{theorem: main} we will have to handle the case where $G_n$ is any graph of density $d > 1 - 1/(k-1)$. We will use the seminal Szemer\'{e}di Regularity Lemma~\cite{szemeredi} to find a large subgraph in $G_n$ with properties approximating those of a complete $k$-partite graph. Let us collect the relevant definitions and tools. We refer a reader to~\cite{komlos1995szemeredi} as a reference on the regularity lemma.
\begin{definition}
    Given $\varepsilon > 0$, 
    a graph $G$ and vertex disjoint sets $A, B \subseteq V(G)$, 
    we say the pair $(A, B)$ is $\varepsilon$-regular if for every $X \subseteq A$ and $Y \subseteq B$ with $|X| > \varepsilon |A|$ and $|Y  > \varepsilon |B|$, 
    we have $|d(X,Y) - d(A,B)| < \varepsilon$, 
    where $d(S,T) := e(S,T)/(|S||T|)$ for all $S, T \subseteq V(G)$.
\end{definition}
\noindent As is well known, the random-like distribution of edges in regular pairs allows them to mimic complete bipartite graphs with respect to containing (sparse) bipartite subgraphs. 
\begin{lemma}\label{lem:focusandcut}
    Let $\varepsilon > 0$. For any $\varepsilon$-regular pair $(A,B)$ in $G$ with $d(A,B) = d$:

    \begin{itemize}
        \item[(i)] If $\ell \geq 1$ and $(d - \varepsilon)^{\ell - 1} > \varepsilon$, then $$|\{(x_1, \ldots, x_{\ell}) \in A^{\ell}: |\cap_i N(x_i) \cap B| \leq (d - \varepsilon)^{\ell}|B|\}| \leq \ell \varepsilon |A|^{\ell}.$$
        \item[(ii)] If $\gamma > \varepsilon$, and $A' \subset A$ and $B' \subset B$ are such that $|A'| \geq \gamma |A|$ and $|B'| \geq \gamma |B|$,
        then $(A', B')$ is an $\varepsilon'$-regular pair of density $d'$, where $\varepsilon' := \max\{\varepsilon/\gamma, 2\varepsilon\}$ and $|d' - d| < \varepsilon$.
    \end{itemize}
\end{lemma}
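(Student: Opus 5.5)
This is the standard slicing lemma, and I would prove both parts directly from the definition of $\varepsilon$-regularity.

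\emph{Part (i).} I will argue by induction on $\ell$. For $\ell=1$, suppose the set $X$ of vertices $x\in A$ with $|N(x)\cap B|\le (d-\varepsilon)|B|$ has $|X|>\varepsilon|A|$. Taking $Y=B$, which is allowed since $|B|>\varepsilon|B|$, gives $d(X,B)\le d-\varepsilon$. This contradicts regularity, so at most $\varepsilon|A|$ single vertices are bad.

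For the inductive step, fix an $(\ell-1)$-tuple $(x_1,\dots,x_{\ell-1})$ that is good, that is, with $Y:=\bigcap_{i<\ell}N(x_i)\cap B$ satisfying $|Y|>(d-\varepsilon)^{\ell-1}|B|$. By hypothesis $(d-\varepsilon)^{\ell-1}>\varepsilon$, so $|Y|>\varepsilon|B|$. Repeating the $\ell=1$ argument with $B$ replaced by $Y$, at most $\varepsilon|A|$ vertices $x_\ell$ satisfy $|N(x_\ell)\cap Y|\le (d-\varepsilon)|Y|$. Every other choice of $x_\ell$ gives
\[
\Bigl|\bigcap_{i\le \ell} N(x_i)\cap B\Bigr|>(d-\varepsilon)^{\ell}|B|.
\]
So an $\ell$-tuple can be bad only in two ways. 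Either its first $\ell-1$ coordinates already form a bad tuple; by induction there are at most $(\ell-1)\varepsilon|A|^{\ell-1}$ such prefixes, each extended in at most $|A|$ ways. Or the prefix is good and $x_\ell$ is one of the at most $\varepsilon|A|$ bad extensions; this accounts for at most $|A|^{\ell-1}\cdot\varepsilon|A|$ tuples. Adding the two counts gives the bound $\ell\varepsilon|A|^\ell$. The induction needs $(d-\varepsilon)^{j-1}>\varepsilon$ for every $j\le\ell$, and this follows from the case $j=\ell$ because $d-\varepsilon\le 1$. If $d-\varepsilon\le 0$ the statement is trivial, so I may assume $d-\varepsilon>0$.

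\emph{Part (ii).} Since $\gamma>\varepsilon$, we have $|A'|\ge\gamma|A|>\varepsilon|A|$, and likewise for $B'$. Applying regularity to the pair $(A',B')$ itself gives $|d'-d|<\varepsilon$.

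Now take $X\subseteq A'$ and $Y\subseteq B'$ with $|X|>\varepsilon'|A'|$ and $|Y|>\varepsilon'|B'|$. Since $\varepsilon'\ge\varepsilon/\gamma$, this gives $|X|>(\varepsilon/\gamma)\gamma|A|=\varepsilon|A|$, and similarly $|Y|>\varepsilon|B|$. Regularity of $(A,B)$ therefore gives $|d(X,Y)-d|<\varepsilon$. By the triangle inequality,
\[
|d(X,Y)-d'|\le |d(X,Y)-d|+|d-d'|<2\varepsilon\le\varepsilon',
\]
so $(A',B')$ is $\varepsilon'$-regular with density $d'$ as required.

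The only point needing care is the counting in the inductive step of (i), specifically keeping the size condition $|Y|>\varepsilon|B|$ valid at each stage. That is exactly the role of the hypothesis $(d-\varepsilon)^{\ell-1}>\varepsilon$.
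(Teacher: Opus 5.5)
Your proof is correct. It is the standard argument for these two properties of regular pairs. The paper itself gives no proof of this lemma: it quotes it as a well-known fact, pointing to the Koml\'os--Simonovits survey, so there is no alternative route to compare against.

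One small remark is worth tightening: your claim that the case $d-\varepsilon\le 0$ is trivial. For $\ell\ge 2$, the hypothesis $(d-\varepsilon)^{\ell-1}>\varepsilon>0$ then forces $d-\varepsilon<0$ with $\ell$ odd. Hence $(d-\varepsilon)^{\ell}|B|<0$ and no tuple is bad. For $\ell=1$, your base-case argument never used $d-\varepsilon>0$, so no separate treatment is needed there.
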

\noindent Property (i) essentially says that small sets of vertices typically have many common neighbours in regular pairs. 
Property (ii) relates how sufficiently large subgraphs of dense regular pairs remain both regular and dense. In our arguments, we will use the following simple corollary of Szemer\'{e}di's regularity lemma~\cite{szemeredi}
and Tur\'{a}n's theorem~\cite{turan}.
\begin{proposition}[Szemer\'edi~\cite{szemeredi}]\label{prop:sze}
    For every $k \geq 2$ and $\alpha, \varepsilon > 0$ with $\alpha \geq 6\varepsilon$, 
    there exist constants $\eta := \eta(k, \alpha, \varepsilon) > 0$ and $n_0 := n_0(k, \alpha, \varepsilon)$ such that,
    for all $n \geq n_0$ and graphs $G$ on $n$ vertices with $d(G) \geq 1 - 1/(k-1) + 2 \alpha$, the following holds: 
    there are pairwise disjoint vertex sets $V_1, \ldots, V_k \subseteq V(G)$ with $|V_1| = \cdots = |V_k| \geq \eta n$ such that, 
    for each $1\leq i< i' \leq k$, the pair $(V_{i}, V_{i'})$ is $\varepsilon$-regular and $d(V_i, V_{i'}) \geq \alpha$.
\end{proposition}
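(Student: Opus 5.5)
We will deduce this from the (degree-free) Szemer\'edi regularity lemma by passing to the reduced graph and then applying Tur\'an's theorem to it. Fix $k$, $\alpha$ and $\varepsilon$ with $\alpha\ge 6\varepsilon$. Apply the regularity lemma to $G$ with regularity parameter $\varepsilon$ and lower bound $m_0:=\lceil 1/\varepsilon\rceil$ on the number of parts. This gives an upper bound $M_0=M_0(\varepsilon)$ and $n_0$ such that the following holds. There is a partition $V(G)=U_0\sqcup U_1\sqcup\cdots\sqcup U_M$ with $m_0\le M\le M_0$, $|U_0|\le \varepsilon n$ and $|U_1|=\cdots=|U_M|=:L$. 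Moreover, all but at most $\varepsilon M^2$ of the pairs $(U_a,U_b)$ are $\varepsilon$-regular. Note that $L\ge (1-\varepsilon)n/M\ge (1-\varepsilon)n/M_0$, so we may set $\eta:=(1-\varepsilon)/M_0$.

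Next we define the reduced graph $R$ on vertex set $[M]$: $ab\in E(R)$ if and only if $(U_a,U_b)$ is $\varepsilon$-regular and $d(U_a,U_b)\ge\alpha$. We then bound the number of edges of $G$ not lying in a pair that corresponds to an edge of $R$.
\begin{itemize}
\item Edges meeting $U_0$: at most $\varepsilon n^2$.
\item Edges inside some $U_a$: at most $M L^2/2\le n^2/(2M)\le \varepsilon n^2/2$, using $M\ge 1/\varepsilon$.
\item Edges in irregular pairs: at most $\varepsilon M^2L^2\le \varepsilon n^2$.
\item Edges in regular pairs of density below $\alpha$: at most $\alpha\binom{M}{2}L^2\le \alpha n^2/2$.
\end{itemize}
Each edge of $R$ accounts for at most $L^2\le n^2/M^2$ edges of $G$. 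Using $d(G)\ge 1-1/(k-1)+2\alpha$, this gives
\[
e(R)\ \ge\ \frac{M^2}{n^2}\Big(e(G)-\big(\tfrac52\varepsilon+\tfrac{\alpha}{2}\big)n^2\Big)\ \ge\ \frac{M^2}{2}\Big(1-\frac{1}{k-1}+\alpha-5\varepsilon-o(1)\Big).
\]
Since $\alpha\ge 6\varepsilon>5\varepsilon$, for $n$ large this exceeds $(1-\frac1{k-1})\frac{M^2}{2}$, which is the Tur\'an bound for $K_k$-free graphs on $M$ vertices.

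By Tur\'an's theorem $R$ contains a clique $\{a_1,\ldots,a_k\}$. Setting $V_i:=U_{a_i}$ gives $k$ pairwise disjoint sets of common size $L\ge\eta n$. By the definition of $R$, every pair $(V_i,V_{i'})$ is $\varepsilon$-regular with $d(V_i,V_{i'})\ge\alpha$, as required.

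The only delicate point is the bookkeeping in the edge count. We must take the lower bound on the number of parts to be at least $1/\varepsilon$ so that edges inside clusters are negligible. We must also check that the hypothesis $\alpha\ge 6\varepsilon$ is enough to absorb all the error terms; the computation above leaves a margin of roughly $\alpha-5\varepsilon>0$. No part of the argument should be a genuine obstacle.
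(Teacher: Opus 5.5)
Your proposal is correct and follows the route the paper intends. The paper gives no written proof; it simply calls the proposition a simple corollary of Szemer\'edi's regularity lemma and Tur\'an's theorem. Your reduced-graph argument fills in that deduction correctly: taking $m_0\ge 1/\varepsilon$ makes intra-cluster edges negligible, the total error is at most $(\tfrac52\varepsilon+\tfrac{\alpha}{2})n^2$, and the resulting margin $\alpha-5\varepsilon\ge\varepsilon>0$ pushes $e(R)$ above the Tur\'an bound, so $R$ contains a copy of $K_k$.
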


Our main probabilistic tool is the following `robust' version of the 1-statement of Theorem~\ref{thm:kreuter}, proved in the $r=2$ case by Das, Morris and Treglown~\cite[Theorem 2.8]{dmt}. It allows us to place down different subgraphs in different parts of the Szemer\'{e}di partition guaranteed by Proposition~\ref{prop:sze} that will interact effectively with the underlying dense graph.
\begin{theorem}\label{thm:robust1statement} Let $\mathbf{H}=(H_1, H_2,\ldots, H_r)$ be an $r$-tuple of graphs. Then there exist $\delta_0$, $c_0 > 0$ such that for all constants $\delta$ with $0 < \delta < \delta_0$, all $t = t(n) \leq \exp(n^{c_0})$ and all constants $\eta_0 > 0$, 
there exists a constant $C > 0$ such that the following holds: 
let  $\mathcal{U} = \{(U_{t'}, \mathcal{H}_{1,t'}, \mathcal{H}_{2,t'}, \ldots ,\mathcal{H}_{r,t'}):\  t' \in [t]\}$ be a collection of $(r+1)$-tuples such that for each $t' \in [t]$ and $j\in [r]$ we have \begin{itemize}
    \item $U_{t'} \subseteq [n]$ with $|U_{t'}| \geq \eta_0 n$, 
    \item $\mathcal{H}_{j,t'} \subseteq \binom{U_{t'}}{H_{j}}$ with $|\binom{U_{t'}}{H_{j}}\setminus \mathcal{H}_{j,t'}| \leq \delta |U_{t'}|^{v(H_j)}$.
\end{itemize} Then if $p \geq Cn^{-1/\beta(\mathbf{H})}$, with probability $1-o(1)$ we have that in every $r$-colouring of the vertex-set $[n]$ of $G(n,p)$ and every $t' \in [t]$ there is some colour $j=j(t')\in [r]$ and a monochromatic set $X\in \mathcal{H}_{j,t'}$ in colour $j$ such that $H_j\subseteq G(n,p)[X]$.  
\end{theorem}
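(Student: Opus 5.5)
We reduce to the two-colour case, which is \cite[Theorem 2.8]{dmt}. Relabel so that $m_1(H_1)\geq m_1(H_2)\geq \cdots \geq m_1(H_r)$, so that $\beta(\mathbf{H})=m_1(H_2,H_1)$. By Fact~\ref{fact: mixed density}, $m_1(H_2,H_1)\geq m_1(H_2)\geq m_1(H_j)$ for every $j\geq 2$. Hence $p\geq Cn^{-1/\beta(\mathbf{H})}$ is above the $1$-density threshold of every $H_j$ with $j\geq 2$. The plan has two steps. First, show that colours $3,\ldots,r$ can only occupy a tiny proportion of each $U_{t'}$. Second, apply the two-colour robust theorem to $(H_1,H_2)$ on what remains.

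\emph{Step 1 (robust density statement for single graphs).} We first show the following. For any $\varepsilon>0$ and $j\geq 2$, if $C$ is large then with probability $1-o(1)$: for every $t'$ and every $W\subseteq U_{t'}$ with $|W|\geq \varepsilon|U_{t'}|$, the graph $G(n,p)[W]$ contains a copy of $H_j$ whose vertex set lies in $\mathcal{H}_{j,t'}$. The argument runs as follows.
\begin{itemize}
\item Since $\delta\ll\varepsilon^{v(H_j)}$, at least half of the potential copies of $H_j$ inside $W$ lie in $\mathcal{H}_{j,t'}$.
\item Janson's inequality then bounds the probability that none of these copies appears by $\exp(-\Omega(C'n))$. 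The $\Delta$-term is controlled by $p\geq Cn^{-1/m_1(H_j)}$: the expected count of each subgraph $J$, divided by $n$, is $\Omega(C^{e(J)})$.
\item Finally we take a union bound over the at most $t\cdot 2^n\leq \exp(n^{c_0}+n)$ choices of $(t',W)$, which the bound $\exp(-\Omega(C'n))$ beats once $C$ is large.
\end{itemize}
Applying this to each $j\geq 2$, in any $r$-colouring with no good monochromatic $H_j$ in colour $j\geq 2$, each colour class $j\geq 2$ meets $U_{t'}$ in fewer than $\varepsilon|U_{t'}|$ vertices.

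\emph{Step 2 (reduction to two colours).} Let $S_{t'}$ be the set of vertices of $U_{t'}$ coloured $3,\ldots,r$; by Step 1, $|S_{t'}|\leq r\varepsilon|U_{t'}|$. We then want to apply \cite[Theorem 2.8]{dmt} to the pair $(H_1,H_2)$. Its natural families consist of the copies in $\mathcal{H}_{1,t'}$ and $\mathcal{H}_{2,t'}$ avoiding $S_{t'}$. Each omits at most $\delta|U_{t'}|^{v}+r\varepsilon v|U_{t'}|^{v}\leq 2\delta |U_{t'}|^{v}$ copies once $\varepsilon\ll\delta$, so the hypotheses hold with $2\delta$ in place of $\delta$. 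A good monochromatic copy of $H_1$ in colour $1$ or of $H_2$ in colour $2$ would then finish the proof.

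\emph{Main obstacle.} The sets $S_{t'}$ depend on the colouring, while \cite[Theorem 2.8]{dmt} fixes the families before the random graph is revealed. Taking a union over all possible $S_{t'}$ costs roughly $\binom{n}{r\varepsilon n}=\exp(O(\varepsilon\log(1/\varepsilon)n))$ extra families, far more than the allowed $\exp(n^{c_0})$. I would address this in one of two ways.
\begin{itemize}
\item First I would try to avoid the union bound. Recolour every vertex of $S_{t'}$ with colour $2$ to obtain a $2$-colouring $c'$, and apply \cite[Theorem 2.8]{dmt} with the original families $\mathcal{H}_{1,t'}$, $\mathcal{H}_{2,t'}$. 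If this produces a colour-$2$ copy $X$ of $H_2$ meeting $S_{t'}$, repeat the argument with $X$ excluded. This uses the fact that the container-type proof of \cite{dmt} actually yields $\Omega(n^{v(H_j)}p^{e(H_j)})$ good copies, of which only an $O(\varepsilon)$ fraction can meet $S_{t'}$.
\item If that fails, I would reopen the proof of \cite[Theorem 2.8]{dmt}. Its failure probability comes from a container/Janson union bound over $2^n$ colourings, which is $\exp(-\omega(n))$ when $C$ is large. That margin absorbs the extra $\exp(O(\varepsilon\log(1/\varepsilon) n))$ choices of exceptional set, provided $\varepsilon$ is chosen small relative to $C$.
\end{itemize}
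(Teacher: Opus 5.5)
\paragraph{A genuine gap in Step 2.} Your Step 1 is correct: Janson with $p\ge Cn^{-1/m_1(H_j)}$, plus a union bound over the $t\cdot 2^n$ pairs $(t',W)$, works. The problem is that Step 2 reduces to a statement that is false in general, so neither fix can close the gap you identify.

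After Step 1 you retain only one fact about colours $3,\dots,r$: they occupy a set $S_{t'}$ with $|S_{t'}|\le r\varepsilon|U_{t'}|$, chosen adversarially after $G(n,p)$ is revealed. Suppose $m_1(H_1)>m_1(H_2)$, and let $J\subseteq H_1$ attain $m_1(H_1)$. Then $\ell_J:=v(J)-e(J)/\beta(\mathbf{H})<1$. So at $p=Cn^{-1/\beta(\mathbf{H})}$, with high probability $G(n,p)$ has only $O(n^{\ell_J}\log n)=o(n)$ copies of $J$, and some set $S$ of size $o(n)$ meets all of them. Colouring $U_{t'}\setminus S$ entirely with colour $1$ then gives neither a colour-$1$ copy of $H_1$ nor a colour-$2$ copy of $H_2$.

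This is what defeats both fixes.
\begin{itemize}
\item In Fix 1, the copies of $H_2$ produced by the Das--Morris--Treglown argument all lie inside a transversal of $\gamma_0 n^{\ell}$ disjoint copies of $H_1$, where $\ell<1$ is possible. A linear-size $S_{t'}$ can swallow that entire transversal, so there is no reason only an $O(\varepsilon)$ fraction of the good copies meet $S_{t'}$.
\item In Fix 2, there is no $\exp(-\omega(n))$ margin. With probability at least $\exp(-O(n^{\ell_J}))$ the graph $G(n,p)$ contains no copy of $J$ at all, and then the all-colour-$1$ colouring defeats the conclusion. This is precisely why the theorem only permits $t\le\exp(n^{c_0})$.
\end{itemize}

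\paragraph{The missing idea.} Treat all colours $j\ge 2$ together, at the sublinear scale where the copies of $H_1$ live, rather than bounding colours $3,\dots,r$ at the linear scale. The paper proceeds as follows.
\begin{itemize}
\item It exposes $G(n,p)=G_1\cup\cdots\cup G_r$ in rounds.
\item It finds in $G_1[U_{t'}]$ a family $\mathcal{D}_{t'}\subseteq\mathcal{H}_{1,t'}$ of $(r-1)\gamma_0 n^{\ell}$ vertex-disjoint copies of $H_1$.
\item Using Janson and a union bound, it shows that for every $j\ge 2$, every set $T'$ of size $\gamma_0 n^{\ell}$ inside a transversal of $\mathcal{D}_{t'}$ has $G_j[T']$ containing a copy of $H_j$ from $\mathcal{H}_{j,t'}$. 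The union bound is affordable because there are only $\exp(O(n^{\ell}))$ such sets. This is where $m_1(H_j)\le m_1(H_2)$ and $m_1(H_2)/\beta(\mathbf{H})\le\ell$ are used.
\item If no member of $\mathcal{D}_{t'}$ is monochromatic in colour $1$, some transversal avoids colour $1$. Pigeonhole over the remaining $r-1$ colours then gives such a $T'$ monochromatic in some colour $j\ge 2$.
\end{itemize}
Your linear-scale density bound on colours $3,\dots,r$ is too coarse to replace this pigeonhole on the transversal.

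\paragraph{A smaller, fixable issue.} Your parameters clash as written: Step 1 needs $\delta\ll\varepsilon^{v(H_j)}$, while Step 2 asks for $\varepsilon\ll\delta$. This can be repaired by taking $\varepsilon$ small relative to the two-colour theorem's $\delta_0$ and then shrinking your own $\delta_0$. The obstruction above cannot be repaired in this way.
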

\begin{proof}
The proof of Theorem~\ref{thm:robust1statement} for all $r \geq 2$ case is obtained by combining the calculations from Das, Morris and Treglown's proof in the $r=2$ case~\cite[Theorem 2.8]{dmt} with the ideas from Kreuter's original proof of Theorem~\ref{thm:kreuter}. Assume without loss of generality that $m_1(H_1)\geq m_1(H_2)\geq \ldots \geq m(H_r)$. We shall reveal $G(n,p)$ in $r$ rounds of exposure as the union of $r $ random graphs $G_1\cup G_2\cup \ldots \cup G_r$ with $G_j\sim G(n, p')$ for every $j\in [r]$, where $p'= C'n^{-1/\beta(\mathbf{H})}$  for some suitably large constant $C'$.

Let $c_1:=m_1(H_2)/m_1(H_2, H_1)$ and $\ell:=\min \left\{v(J)- (e(J)/m_1(H_2, H_1)): \ J\subseteq H_1, \ e(J)>0\right\}$. As shown in~\cite[Inequalities (3.2) and (3.3)]{dmt}, we have $c_1\leq \ell \leq 1$. Further, Das, Morris and Treglown showed in~\cite[Proof of Theorem 2.8, p.\ 1001]{dmt} that there exist constants $c_0,\gamma_0>0$ depending only on $H_1$, $H_2$ and $\eta_0$ such that if $t\leq \exp(n^{c_0})$ then with probability $1-o(1)$, for every $t'\in [t]$, $G_1[U_{t'}]$ contains a collection $\mathcal{D}_{t'} \subseteq \mathcal{H}_{1,t'}$ of $(r-1)\gamma_0 n^{\ell} $ vertex-disjoint $v(H_1)$-sized sets $X$ with $H_1\subseteq G_1[X]$.

Given such a collection $\mathcal{D}_{t'}$, let $\mathcal{T}(\mathcal{D}_{t'}):=\{T\subseteq U_{t'}: \ \vert T\cap X\vert =1 \ \forall X\in \mathcal{D}_{t'}\}$ denote the collection of transversals of $\mathcal{D}_{t'}$. Further, let $\mathcal{T}'(\mathcal{D}_{t'})=\{T' \in \binom{T}{\gamma_0n^{\ell}}: \ T\in \mathcal{T}(\mathcal{D}_{t'})\}$ denote the collection of subsets of transversals of $\mathcal{D}_{t'}$ of size precisely $\tilde{n}=\gamma_0n^{\ell}$. Clearly, we have $$\vert \mathcal{T}'(\mathcal{D}_{t'})\vert=v(H_1)^{(r-1)\gamma_0n^{\ell}} \binom{(r-1)\gamma_0n^{\ell}}{\gamma_0 n^{\ell}}\leq \exp\left(C''n^{\ell}\right)$$ for some constant $C''$ depending only on $H_1, H_2$ and $\eta_0$.

Now for any $j\in [r]\setminus[1]$, as shown in~\cite[Proof of Theorem 2.8, p.\ 1002]{dmt}, choosing $C'$ sufficiently large ensures that with probability $1-\exp\left(-\Omega(n^{\ell})\right)$, for every set $T'\in \mathcal{T}'(\mathcal{D}_{t'})$ the graph $G_j[T']$ contains at least $\tilde{n}^{v(H_j)} (p')^{e(H_j)}/2$ copies of $H_j$. Further as shown in~\cite[Proof of Theorem 2.8, p.\ 1003]{dmt}, with probability $1-\exp\left(-\Omega(n^{\ell/e(H_j)})\right)$, at most $\tilde{n}^{v(H_j)}(p')^{e(H_j)}/4$ of these copies fail to lie in $\mathcal{H}_{j, t'}$. Taking a union bound over all $j\in [r]\setminus [1]$, and picking $c_0<\min\left\{c_1, \ \min \left\{\frac{\ell}{e(H_j)}: \ 2\leq j\leq r\right\}\right\}$, we conclude that with probability $1-o(1)$ the following holds for every $t'\in [t]$:
\begin{enumerate}[(a)]
    \item there is a set $\mathcal{D}_{t'}$ of  $(r-1)\tilde{n}$ vertex-disjoint sets $X\in \mathcal{H}_{1,t'}$ such that $H_1\subseteq G_1[X]$;
    \item for every $j\in [r]\setminus[1]$ and every $T'\in \mathcal{T}'(\mathcal{D}_{t'})$, there exists a set $X\in \binom{T'}{ v(H_j)}\cap \mathcal{H}_{j,t'}$ such that $H_j\subseteq G_j[X]$.
\end{enumerate}
Let us condition on this event holding. Then for every colouring of $[n]$ and every $t'\in [t]$, we have by (a) above that either there is a monochromatic set $X\in \mathcal{D}_{t'}\subseteq \mathcal{H}_{1,t'}$ in colour $1$ such that $H_1\subseteq G_1[X]\subseteq G(n,p)[X]$, or there is a transversal $T \in \mathcal{T}(\mathcal{D}_{t'})$ on which the colour $1$ does not appear. In the latter case, by averaging there is some $T' \subseteq T$ in $\mathcal{T}'(\mathcal{D}_{t'})$ such that $T'$ is monochromatic in colour $j$ for some $j\in [r]/[1]$. But then by (b) above this yields a subset $X\in\mathcal{H}_{j,t'}$ monochromatic in colour $j$ such that $H_j\subseteq G_j[X]\subseteq G(n,p)[X]$, and we are done.
\end{proof}

\section{The threshold in Tur\'an graphs}\label{section: turan}
Let us prove Theorem~\ref{theorem: turan}.
\begin{proof}[Proof of Theorem~\ref{theorem: turan}(i)]
    Fix certificates $\mathcal{F}(H_1;k), \ldots,  \mathcal{F}(H_r;k)$ that maximise $\beta(\mathbf{H}; k)$.  Label the $k$ parts of $T_k(n)$ as $V_1, \ldots, V_k$. Since $\beta(\mathbf{H}; k) \leq \beta(\mathcal{F}(H_1; k)_i,\  \mathcal{F}(H_2;k)_i,\  \ldots, \mathcal{F}(H_r; k)_i)$ for all $i \in [k]$, we may apply Proposition~\ref{prop:0family} for each $i \in [k]$ 
    to yield with probability $1-o(1)$ an $r$-colouring $c_i$ of $V_i$ such that $T_k(n)\cup G_{n,p}[V_i]$ contains no monochromatic copy of a graph in $\mathcal{F}(H_j:k)_i$ in colour $j$ for any $j\in [r]$. We then let $c$ denote the union of the $k$ colourings $c_i$, $i\in [k]$.

    We claim that for every $j\in [r]$, there is no monochromatic copy of $H_j$ in colour $j$ in the vertex-colouring $c$ of $T_k(n)\cup G_{n,p}$. Indeed, every copy of $H_j$ in $T_k(n)\cup G_{n,p}$ yields an ordered $k$-tuple $((H_j)_1, \ldots, (H_j)_k)\in \mathrm{Part}(H, k)$, with $(H_j)_i$ the subgraph of our copy of $H_j$ induced by the vertices in $V_i$. Since $\mathcal{F}(H_{j};k)$ is a $k$-partite certificate of $H_{j}$-freeness, for each such $k$-tuple there is some index $i_0\in [k]$ such that $(H_j)_{i_0} \in \mathcal{F}(H_{j};k)_{i_0}$. By construction of $c_{i_0}$, there is no monochromatic copy of $(H_j)_{i_0}$ in $T_k(n)\cup G_{n,p}[V_{i_0}]$ in colour $j$. This in turn implies that there is no monochromatic copy of $H_j$ in colour $j$ in the $r$-colouring $c$ of $T_{k}(n)\cup G_{n,p}$.
\end{proof}
\begin{proof}[Proof of Theorem~\ref{theorem: turan}(ii)]
    Let $G=T_k(n)\cup G_{n,p}$, for $p>Cn^{-1/\beta(\mathbf{H};k)}$ and some constant $C>0$ to be specified later. Let $V_1,\ \ldots,\  V_k$ denote the $k$ parts of our $T_k(n)$. By definition, for any ordered $k$-tuples of $k$-partite certificates of $H_1$-,  $\ldots$, $H_r$-freeneess $\left(\mathcal{F}(H_1;k), \ldots, \mathcal{F}(H_r;k)\right)$ we have $\beta(\mathbf{H};k) \geq\min_{i\in [k]}\beta (\mathcal{F}(H_1; k)_i, \ldots, \mathcal{F}(H_r;k)_i)$.

    By Theorem~\ref{thm:kreuter} applied to the (finite) set of ordered $k$-tuples of $k$-partite certificates of $H_1$-, $\ldots$, $H_r$-freeness $\left(\mathcal{F}(H_1;k), \ldots, \mathcal{F}(H_r;k)\right)$ and each $i_0\in [k]$, there exists a choice of $C$ ensuring that with probability $1-o(1)$ for every such $\left(\mathcal{F}(H_1;k), \ldots, \mathcal{F}(H_j;k)\right)$ and every $i_0\in [k]$ such that $\min_{i\in [k]}\beta (\mathcal{F}(H_1; k)_i, \ldots, \mathcal{F}(H_r;k)_i)$ is achieved at $i=i_0$, we have $G[V_{i_0}]=G_{n,p}[V_{i_0}]\rightarrow (\mathcal{F}(H_1;k)_{i_0}, \ldots ,\mathcal{F}(H_r;k)_{i_0})_v$. Let $\mathcal{E}$ denote this event.

    We claim that if $\mathcal{E}$ occurs, then $G\rightarrow (H_1, \ldots, H_r)_v$. Indeed, assume for a contradiction that $\mathcal{E}$ occurs but there exists an $r$-colouring $c$ of $V(G)$ such that for every $j\in [r]$ there is no monochromatic copy of $H_{j}$ in colour $j$ in $G$. Then for every $j\in [r]$ and every $((H_{j})_1, \ldots, (H_{j})_k) \in \mathrm{Part}(H_{j}, k)$, there must be some $i \in [k]$ such that $G[V_{i}]$ contains no monochromatic copy of $(H_j)_{i}$ in colour $j$ (otherwise we would have a monochromatic copy of $H$ in colour $j$ inside $G$). We can thus construct a $k$-partite certificate of $H_j$-freeness $\mathcal{F}(H_j; k)$ by letting $\mathcal{F}(H_j; k)_{i}$ be the collection of all $(H_j)_i$ such that there exists a $k$-tuple $((H_{j})_1, \ldots, (H_{j})_k) \in \mathrm{Part}(H_{j}, k)$ such that $G[V_i]$ contains no monochromatic copy of $(H_j)_i$ in colour $j$.
    We thus obtain an $r$-tuple $(\mathcal{F}(H_1; k), \ldots, \mathcal{F}(H_r;k))$, where for each $j\in [r]$ the $k$-tuple $\mathcal{F}(H_j;k)$ is a $k$-partite certificate of $H_j$-freeness. Let $i_0\in [k]$ be any index such that $\min_{i\in [k]}\beta (\mathcal{F}(H_1; k)_i, \ldots, \mathcal{F}(H_r;k)_i)$ is achieved at $i=i_0$. Then by construction of $c$ we have that $G[V_{i_0}]=G_{n,p}[V_{i_0}]\not\rightarrow (\mathcal{F}(H_1;k)_{i_0}, \ldots \mathcal{F}(H_r;k)_{i_0})_v$, contradicting our assumption that $\mathcal{E}$ holds.\end{proof}



\section{The threshold in general graphs}\label{section: 1-statement}
\noindent As noted earlier, Theorem~\ref{theorem: turan}(i) immediately implies the 0-statement of Theorem~\ref{theorem: main}. All that remains therefore is to prove the 1-statement of Theorem~\ref{theorem: main}.
\begin{proof}[Proof of the 1-statement of Theorem~\ref{theorem: main}]
    Take $\alpha > 0$ such that $d \geq 1 - 1/(k-1) + 2\alpha$. Let $\varepsilon>0$ be a sufficiently small constant to be specified later. Let $\eta>0$ and $n_0\in \mathbb{Z}_{>0}$ be the constants whose existence is guaranteed by Proposition~\ref{prop:sze}. For $n\geq n_0$, consider an $n$-vertex graph $G_n$ with $d(G_n)\geq d$, and let $c$ be an arbitrary $r$-colouring of $G_n \cup G(n,p)$.

    By Proposition~\ref{prop:sze}, there exist $k$ pairwise disjoint sets $V_1, V_2, \ldots, V_k$ in $V(G_n)$, each of size at least $\eta n$, such that each pair $(V_i, V_{i'})$ is $\varepsilon$-regular in $G_n$ with density at least $\alpha$. We shall follow an algorithmic procedure that will involve repeatedly passing to subsets inside the sets $V_i$ in order to find either a monochromatic copy of $H_j$ for some $j\in [r]$ or a $k$-partite certificate of $H_j$-freeness for every $j\in [r]$ - we will show the latter holds with probability at most $o(1)$.

    Initially set $U_i=V_i$ for each $i\in [k]$. Throughout the proof, we say a set of vertices $X\subseteq U_i$ is
    \textbf{popular} if the vertices 
    in $X$ have at least $(\frac{\alpha}{2})^{v(X)}\vert U_{i'}\vert $ common $G_n$-neighbours in $U_{i'}$ for every $i' > i$. We now describe our algorithmic procedure.



For each colour $j\in [r]$, we initially set $\mathcal{F}(H_j;k)$ to be a $k$-tuple of empty sets. For each $k$-tuple $((H_{j})_1, \ldots, (H_{j})_k) \in \mathrm{Part}(H_{j}, k)$, we then sweep through the sets $U_i$, $1\leq i\leq k$, in order, as follows. In step $i$, if $(H_j)_i$ is an empty graph, we move to step $i+1$. Otherwise, we check whether or not there is a $v((H_j)_i)$-sized set $X_i\subseteq U_i$ such that
\begin{enumerate}[(a)]
    \item $X_i$ is monochromatic in colour $j$,
    \item $X_i$ is popular, and 
    \item $(H_j)_i$ is a subgraph of $G(n,p)[X_i]$.
\end{enumerate} 
If such a set $X_i$ exists, then for every $i'>i$ we replace $U_{i'}$ by the common neighbourhood of $X_i$ in $U_{i'}$ and then move to step $i+1$. Otherwise, if no such set $X_i$ exists, we add the graph $(H_j)_i$ to $\mathcal{F}(H_j;k)_i$ and move to step $i+1$. When we reach step $k+1$, we then move on to the next $k$-tuple from $\mathrm{Part}(H_{j}, k)$.

Observe that when we reach step $k+1$, one of two things must have happened: either we have a collection of sets $X_i\subseteq U_i$, $i\in [k]$, each monochromatic in colour $j$, such that $(H_j)_i\subseteq G(n,p)[X_i]$ and for every $i<i'$ the graph $G_n$ contains the complete bipartite graph between $X_i$ and $X_{i'}$ - in which case  we have clearly found a copy of $H_j$ in colour $j$ in $G_n\cup G(n,p)$ - or we have added $(H_j)_i$ to $\mathcal{F}(H_j; k)_i$ for some $i\in [k]$.

Thus when we have completed our algorithmic procedure for every colour $j\in [r]$, we must either have found a monochromatic copy of $H_j$ in colour $j$ for some $j\in [r]$ or produced for every $j\in [r]$ a $k$-partite certificate of $H_j$-freeness $\mathcal{F}(H_j;k)$. Let us show the latter is a $o(1)$ probability event. After our algorithmic procedure has run its course, let $(\widetilde{U}_{1}, \ldots , \widetilde{U}_r)$ denote the final state of the sequence of sets $(U_{1}, \ldots, U_r)$ found during the procedure.

Set $s:=k\sum_{j=1}^r \vert \mathrm{Part}(H_j;k)\vert$ and $v:=\max_{1\leq j\leq r}v(H_j)$. Note that $s$ is an upper bound on the number of times we shrink a set $U_{i'}$ by replacing it with the common neighbourhood inside $U_{i'}$ of some set $X_i$ (which will always have size at most $v$), and further that $s,v$ are constants depending only on $H_1, \ldots, H_r, k$. Setting $\gamma:= \left(\frac{\alpha}{2}\right)^{sv}$, we may choose $\varepsilon > 0$ sufficiently small such that the following inequalities are satisfied for a sufficiently small choice of $\delta_0$ that will be made later:
\begin{align}\label{eq: epsilon bounds}
  \gamma >\varepsilon,  && \left(\alpha-\varepsilon- \frac{\varepsilon}{\gamma}\right)^v>\left(\frac{\alpha}{2}\right)^v > \frac{\varepsilon}{\gamma}, \textrm{ and }&& \frac{\delta_0}{2}>\frac{sv\varepsilon}{\gamma}.
\end{align}
\noindent Now, throughout our procedure, for every $i\in [k]$ the set $U_i$ satisfies
\begin{align}\label{eq: lower bound on the Ui}
\vert U_i\vert \geq \vert \widetilde{U}_i\vert\geq   \left(\frac{\alpha}{2}\right)^{sv}\vert V_i\vert= \gamma\vert V_i\vert \geq \gamma \eta n=:\eta_0 n.
\end{align} In particular, it follows from Lemma~\ref{lem:focusandcut} (ii) and the first inequality in~\eqref{eq: epsilon bounds} that for every $1\leq i< i'\leq k$ the pair $(\widetilde{U}_i, U_{i'})$ is an $\varepsilon'$-regular pair of density $\alpha'$, where $\varepsilon'=\varepsilon/\gamma$ and $\alpha'\geq \alpha-\varepsilon$. Applying Lemma~\ref{lem:focusandcut} (i) and the second inequality in~\eqref{eq: epsilon bounds}, we see that at every stage of the algorithm, for every $v'\in[v]$ at most 
     $v' \varepsilon'\vert \widetilde{U}_i\vert^{v'}$
sets $X_i\subseteq \widetilde{U}_i$ of size $v'$ fail to be popular (with respect to the current values of $U_{i'}:\ i'>i$). In particular, using the third inequality in~\eqref{eq: epsilon bounds}, the number of $v'$-sized sets in $\widetilde{U_i}$ that fail to be popular at some stage in the algorithm is at most
\begin{align}\label{eq: delta0 bound}
    sv' \varepsilon'\vert \widetilde{U}_i\vert^{v'}  <\frac{
\delta_0}{2} \vert \widetilde{U}_i\vert^{v'}.
\end{align}
\noindent Further, the evolution of our sequence of sets $(U_1, \ldots, U_k)$ until it reaches its final state $(\widetilde{U}_1,\ldots ,\widetilde{U}_k)$ is determined by the choice of the original sets $(V_1, \ldots, V_k)$ and the successive choices of the sets $X_i$, whose union contains a total of at most $sv$ vertices. The number $t=t(n)$ of possible such sequences is thus at most $n^{sv}$. We arbitrarily order these sequences, and let $(\widetilde{U}_{1,t'}, \widetilde{U}_{2,t'}, \ldots, \widetilde{U}_{k,t'})$ denote for each $t'\in [t]$ the final state of $(U_1, \ldots, U_k)$ in the $t'$-th sequence.

By definition of $\beta$, there exists some $i_0 \in [k]$ and for each $j\in [r]$ a graph $(H_j)_{i_0}\in \mathcal{F}(H_j; k)_{i_0}$ such that  $\beta(\mathbf{H};k)\geq \beta ((H_1)_{i_0}, \ldots, (H_r)_{i_0})$. For every $t'\in [t]$ and every $j\in [r]$, we let $\mathcal{H}_{j,t'}$ denote the collection of $v((H_j)_{i_0})$-sized subsets of $\widetilde{U}_{i_0,t'}$ that are popular at every stage of the evolution of the $t'$-th sequence of sets $(U_1, \ldots, U_k)$. We then set $\mathcal{U}:=\{(\widetilde{U}_{i_0,t'}, \mathcal{H}_{1, t'}, \ldots , \mathcal{H}_{r,t'}): \ t'\in [t]\}$.

We let $\delta_0,c_0>0$ be the constants whose existence is guaranteed by Theorem~\ref{thm:robust1statement} applied to the $r$-tuple $((H_1)_{i_0}, \ldots, (H_r)_{i_0})$. Setting $\delta=\delta_0/2$, recalling we have defined $\eta_0=\gamma \eta$ above, and using the inequalities~\eqref{eq: lower bound on the Ui} and~\eqref{eq: delta0 bound}, we see the parameter $t=t(n)=o(e^{n^c})$ and the family $\mathcal{U}$ defined above satisfy the assumptions required to apply Theorem~\ref{thm:robust1statement}: there exists $C>0$ such that for $p \geq Cn^{-1/\beta((H_1)_{i_0}, \ldots , (H_r)_{i_0})}$, with probability $1-o(1)$ 
there is for every $t'\in [t]$ some colour $j$ such that there is a monochromatic copy of $(H_j)_{i_0}$ in colour $j$ based on some ever-popular set in $\mathcal{H}_{j,t'}$.

If these monochromatic copies could be found, then regardless of which of the $t$ possible sequences of the $k$-tuple $(U_1, \ldots, U_k)$ our algorithmic procedure gave rise to, one of the $(H_j)_{i_0}$ would not have been included in $\mathcal{F}(H_j)_{i_0}$. In other words, for $p\geq Cn^{-1/\beta(\mathbf{H};k)}\geq Cn^{-1/\beta((H_1)_{i_0}, \ldots , (H_r)_{i_0})}$, the probability that our algorithmic procedure does not find a monochromatic copy of $H_j$ in colour $j$ for some $j\in [r]$ in $G_n\cup G(n,p)$ but instead outputs a $k$-partite certificate of $H_j$-freeness for every $j\in [r]$ is $o(1)$, proving the theorem.
\end{proof}

\section{Concluding remarks}

In this paper we have focused on vertex-Ramsey properties of randomly perturbed graphs. 
The study of edge-Ramsey properties of randomly perturbed graphs is ongoing. 
For graphs $H_1$, $H_2$, we say a graph $G$ is $(H_1, H_2)_e$-Ramsey if every red/blue edge colouring of the edges of $G$ yields a monochromatic copy of $H_1$ in red or a monochromatic copy of $H_2$ in blue.
The study of edge-Ramsey properties of randomly perturbed graphs was initiated by Krivelevich, Sudakov and Tetali~\cite{kst} who determined for all $t \geq 3$ the threshold at which adding a copy of $G(n,p)$ to a dense graph results in a graph that is $(K_3, K_t)_e$-Ramsey.
This was followed by work of Das and Treglown~\cite{dt} who almost completely answered a question of
Krivelevich, Sudakov and Tetali~\cite{kst} by generalising their result to cover all pairs of cliques except $(K_4, K_t)$, for $t \geq 5$, 
with the result for $(K_4, K_4)$ following from bounds given by Das and Treglown~\cite{dt} and a construction of Powierski~\cite{powierski}.
Das and Treglown~\cite{dt} also addressed the corresponding problems for pairs of cycles fully and for pairs of cycles and cliques in almost all cases.

All these results were obtained before the recent resolution of the Kohayakawa-Kreuter~\cite{kk} conjecture on the edge-Ramsey properties of $G(n,p)$ by Mousset, Nenadov and Samotij~\cite{mns} (for the 1-statement),
Kuperwasser, Widgerson and Samotij~\cite{ksw},
Bowtell, Hancock~\cite{bhh} and the third author, and Christoph, Martinsson, Steiner and Wigderson~\cite{cmsw} (for the 0-statement).
 The confirmation of this conjecture paves the way for building on the results of~\cite{dt, kst, powierski} and determining the threshold for edge-Ramsey properties in randomly perturbed random graphs in full generality.
The very recent work of Yancey~\cite{y} on a generalisation of the Kohayakawa-Kreuter conjecture to families of graphs via a graph partitioning conjecture of Kuperwasser, Widgerson and Samotij~\cite{ksw} may also prove useful to this endeavour.

Finally, we should mention work of Kuperwasser and Samotij~\cite{ks} on a list version of the Kohayakawa--Kreuter conjecture and of Aigner-Horev, Danon, Hefetz and Letzter~\cite{aigner2022large} on an anti-Ramsey problem in randomly perturbed graphs, which provide potentially interesting directions to extend the work in this paper. In addition, one could consider hypergraph versions of our problem; however, we currently lack an adequate extremal theory of hypergraphs to extend our methods, and this presents us with an obstacle which it would likely be necessary to overcome first before one can study vertex-Ramsey problems in randomly perturbed hypergraphs.


\bibliographystyle{plain}
\bibliography{vertexramseybiblio}
\end{document}